\numberwithin{equation}{section}
\newcommand{\Ric}{{\rm Ric}}
\newcommand{\Vol}{{\rm Vol}}
\newcommand{\cH}{\mathcal{H}}
\newcommand{\cM}{\mathcal{M}}
\newcommand{\cU}{\mathcal{U}}
\newtheorem{theorem}{Theorem}[section]
\newtheorem{proposition}[theorem]{Proposition}
\newtheorem{lemma}[theorem]{Lemma}
\newtheorem{corollary}[theorem]{Corollary}
\theoremstyle{definition}
\newtheorem{definition}[theorem]{Definition}
\theoremstyle{remark}
\newtheorem{remark}{Remark}[section]
\theoremstyle{remark}
\theoremstyle{remark}
\theoremstyle{remark}\newtheorem{conjecture}{Conjecture}[section]
\theoremstyle{remark}
\def\comp{\ensuremath\mathop{\scalebox{.6}{$\circ$}}}
\begin{document}

\title{Weak scalar curvature lower bounds along Ricci flow}
\date{\today}

\author{Wenshuai Jiang}
\address{W. Jiang,~~School of mathematical Sciences,  Zhejiang University}
\email{wsjiang@zju.edu.cn}
\author{Weimin Sheng}
\address{W. Sheng,~~School of mathematical Sciences,  Zhejiang University}
\email{weimins@zju.edu.cn}
\author{Huaiyu Zhang}
\address{H. Zhang,~~School of mathematical Sciences,  Zhejiang University}
\email{zhanghuaiyu\_math@outlook.com}
\thanks{The first author was supported by NSFC (No. 12071425, No. 12125105); The second author was supported by NSFC (No. 11971424, No. 12031017); The third author was supported by NSFC (No. 11971424)}

\begin{abstract}


In this paper, we study Ricci flow on compact manifolds with a continuous initial metric. It was known from Simon that the Ricci flow exists for a short time. We prove that the scalar curvature lower bound is preserved along the Ricci flow if the initial metric has a scalar curvature lower bound in distributional sense provided that the initial metric is $W^{1,p}$ for some $n<p\le \infty$. 
	As an application, we use this result to study the relation between Yamabe invariant and Ricci flat metrics.
We prove that if the Yamabe invariant is nonpositive and the scalar curvature is nonnegative in distributional sense, then the manifold is isometric to a Ricci flat manifold.

\end{abstract}
\maketitle
\tableofcontents

\section{Introduction}
The study of low-regularity Riemannian metrics with some weak curvature conditions is an important theme in Riemannian geometry. For sectional curvature lower bounds and Ricci curvature lower bounds, many beautiful results have been established (for Alexandrov spaces theory, see, e.g. \cite{ABN}, \cite{BBI}; for Ricci curvature lower bounds, see, e.g. \cite{CC1}, \cite{CC2}, \cite{CC3}, \cite{CN1}, \cite{CN2},  \cite{JN21}, \cite{CJN21}; or for an optimal transport approach, see, e.g. \cite{LV}, \cite{St1}, \cite{St2}, \cite{St3}). 

However, it has not been well understood for scalar curvature lower bounds. One of a fundamental property for classical scalar curvature lower bounds is that they are  preserved by Ricci flow. Ricci flow is firstly introduced by Hamilton \cite{Hamilton1982}, which comes to be an powerful tool in geometry and has been used to prove the Poincar\'e Conjecture, see \cite{Pr1}, \cite{Pr2}, \cite{Pr3}, \cite{BL}, \cite{CZ}, \cite{MT}. The lower bounds preserving property is still true for scalar curvature lower bounds in some weak sense and it is quite useful in the study of low-regularity Riemannian metrics with some weak scalar curvature conditions. Gromov introduced a scalar curvature lower bound in some weak sense, which could be defined for $C^0$ metrics, see \cite{Gm}. Bamler and Burkhardt-Guim considered it by a Ricci flow approach, they elaborated on defining a weak sense scalar curvature lower bound which is required to be  preserved by Ricci flow, see\cite{Bm}, \cite{PBG}. If a metric satisfies that its Ricci flow has a scalar curvature lower bound at each short positive time, then this metric must satisfies the definition of scalar curvature lower bounds in Gromov \cite{Gm}. It remains an open problem whether the converse is true (see \cite{PBG}). Schoen also considered scalar curvature on low-regularity metrics. He proposed a question that if the Yamabe invariant $\sigma(M)$ is nonpositive, the metric admits singularity in a subset and the scalar curvature is at least $\sigma(M)$ away from the singular set, then whether we can prove that the metric is smooth and Ricci flat provided that the singular set is small, see \cite{LiMa}. Li and Mantoulidis \cite{LiMa} gave an answer for his skeleton metrics and for $3$-manifolds with metrics admitting point singularities. For more related results, see the survey of Sormani \cite{So}.

McFeron and Sz\'ekelyhidi \cite{McSz} proved that if a Lipschitz metric is smooth away from a hypersurface satisfying certain conditions on the mean curvature and the metric has nonnegative scalar curvature pointwisely away from the hypersurface, then the scalar curvature will be nonnegative pointwisely  on the whole manifold under Ricci flow. This property appears to be quite useful in the study of the positive mass theorem, especially for the rigidity part of the positive mass theorem, see \cite{McSz}. (See also \cite{Miao2002}.) Shi and Tam \cite{ShTa} have proved the nonnegative scalar curvature preserving property for $W^{1,p}$ $(n<p\le \infty)$ metrics which are smooth away from a singular set with Minkowski dimension at most $n-2$. Moreover, they apply this property to get a positive mass theorem for low-regularity metrics and establish an answer of Schoen's question for $W^{1,p}$ metrics. LeFloch and Mardare defind a scalar curvature lower bound in distributional sense for $W^{1,p}$ metrics. This was further studied by \cite{Lee2015a,LS15} in asymptotically flat manifolds. Lee and Lefloch \cite{Lee2015a} proved a positive mass theorem on spin manifolds with $W^{1,n}$ metrics which has nonnegative scalar curvature in distributional sense. Lee and Lefloch's (\cite{Lee2015a}) definition of weak scalar curvature lower bounds recovers the case in \cite{McSz}, \cite{Miao2002} and \cite{ShTa}, thus the positive mass theorem in \cite{Lee2015a} recovers it in \cite{Miao2002}, \cite{McSz} and \cite{ShTa} in spin case.

We improve some of the above results. The main theorem is the following:
\begin{theorem}\label{thm1.1}
Let $M^n$ be a compact manifold with a metric $g\in W^{1,p}(M)$ $(n< p\le \infty)$. Assume the scalar curvature $R_g\ge a$ for some constant $a$ in distributional sense, and let $g(t),t\in(0,T_0]$ be the Ricci flow starting from the metric $g$. Then for any $t\in(0,T_0]$, there holds $R_{g(t)}\ge a$ on $M$. 
\end{theorem}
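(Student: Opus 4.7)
The plan is to fix an arbitrary $T \in (0, T_0]$ and $x_0 \in M$ and to show $R_{g(T)}(x_0) \geq a$; since $g(T)$ is smooth this is an honest pointwise statement, and as $T, x_0$ are arbitrary it gives the theorem. I would prove it by a monotonicity argument along the smooth flow, following in spirit the conjugate-heat-kernel approach used in the weak scalar-curvature work of Bamler and Burkhardt-Guim. Let $K(s, y) := K(s, y; T, x_0)$ denote the conjugate heat kernel of the smooth Ricci flow based at the pole $(T, x_0)$; for $s \in (0, T)$ this $K$ is smooth and strictly positive on $M$, satisfies the conjugate heat equation $\partial_s K = -\Delta_{g(s)} K + R_{g(s)} K$, and carries the conserved unit mass $\int_M K(s, \cdot)\, d\mu_{g(s)} = 1$.

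Combining this equation with $\partial_t R = \Delta R + 2|\Ric|^2$, the Ricci-flow identity $\partial_t d\mu_{g(t)} = -R\, d\mu_{g(t)}$, and integration by parts on the closed manifold $M$ yields the standard monotonicity
\begin{equation*}
\frac{d}{ds}\int_M R_{g(s)}\, K(s, \cdot)\, d\mu_{g(s)} \;=\; 2 \int_M |\Ric|^2\, K(s, \cdot)\, d\mu_{g(s)} \;\geq\; 0.
\end{equation*}
Letting $s \uparrow T$, where $K(s, \cdot)\, d\mu_{g(s)}$ concentrates at $x_0$, gives $R_{g(T)}(x_0) \geq \int_M R_{g(s)} K(s, \cdot)\, d\mu_{g(s)}$ for every $s \in (0, T)$. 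Since each $g(s)$ is smooth, the right-hand side agrees with the LeFloch--Mardare distributional pairing of $R_{g(s)}$ with $K(s, \cdot)$. Granted the convergence $g(s) \to g$ in $W^{1, p}$ together with a suitable limit $K(0, \cdot) \geq 0$ of the test functions in $W^{1, q}$ (with $q = p/(p-1)$), the integral passes to $\langle R_g, K(0, \cdot)\rangle$. Since $K(0, \cdot)$ inherits the unit $g$-mass by conservation, the hypothesis $R_g \geq a$ yields $\langle R_g, K(0, \cdot)\rangle \geq a$, and hence $R_{g(T)}(x_0) \geq a$.

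The main obstacle is to make the passage $s \to 0^+$ rigorous. Two inputs are needed: (i) the upgrade of Simon's $C^0$ short-time convergence $g(s) \to g$ to $W^{1, p}$ convergence, which I would expect to come from parabolic regularity for the Ricci--DeTurck flow from $W^{1, p}$ initial data and to be established earlier in the paper; and (ii) a uniform $W^{1, q}$-bound on $K(s, \cdot)$ together with strong enough convergence to a limit $K(0, \cdot)$ in the same space to stabilise the LeFloch--Mardare pairing in the limit. Ingredient (ii) is the delicate part, because the backward evolution $\partial_s K = -\Delta_{g(s)} K + R_{g(s)} K$ has potential equal to precisely the quantity whose initial behaviour is only weakly controlled by hypothesis, so one cannot invoke standard parabolic estimates with a bounded potential. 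One likely strategy is to combine Gaussian-type heat-kernel bounds --- available once $g(s)$ is controlled in $W^{1, p}$ --- with integrated energy estimates on $K$ and $\nabla K$ to produce the required uniform regularity down to $s = 0$.
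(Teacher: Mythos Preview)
Your monotonicity computation via the conjugate heat equation is exactly the mechanism the paper uses, but the paper handles the passage to the initial time quite differently, and this difference is precisely what closes the argument.

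The paper does \emph{not} take $s\to 0^+$ along the singular flow. Instead it first mollifies the initial metric to smooth metrics $\hat g_\delta\to\hat g$ in $C^0\cap W^{1,p}$ (Lemma~\ref{lm2.1}), runs the Ricci flow $g_\delta(t)$ from each $\hat g_\delta$, and carries out your monotonicity along the \emph{smooth} flow $g_\delta(\cdot)$ all the way down to $t=0$. At $t=0$ the pairing $\int_M (R_{\hat g_\delta}-a)\varphi_0\,d\mu_{\hat g_\delta}$ is classical, and Lemma~\ref{lm2.2} compares it with the distributional pairing $\langle R_{\hat g}-a,\varphi_0\rangle\ge 0$, incurring an error $\le b_\delta\|\varphi_0\|_{W^{1,n/(n-1)}}$ with $b_\delta\to 0$. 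The needed bounds on $\varphi_0$ (Proposition~\ref{prop_varphit}) are proved assuming the initial metric is smooth, so they apply uniformly in $\delta$ to the flows $g_\delta(\cdot)$. One then lets $\delta\to 0$ at the \emph{fixed positive} time $T$, where $g_\delta(T)\to g(T)$ smoothly by Simon's theory.

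Your two obstacles are real, and the paper does not supply the ingredients you hope for. It proves only the uniform bound $\int_M|\tilde\nabla g(t)|^p\le 10A$ (Theorem~\ref{thm6.2}(1)), not strong $W^{1,p}$ convergence $g(s)\to\hat g$; since the quantities $V,F$ in the distributional scalar curvature are nonlinear in $\tilde\nabla g$, weak convergence would not suffice for the pairing to pass to the limit. Likewise the conjugate-heat estimates of Proposition~\ref{prop_varphit} are stated and proved only for smooth initial data, so they do not directly yield a controlled limit $K(0,\cdot)$ on the singular flow. The mollification trick sidesteps both issues simultaneously: the only $W^{1,p}$ convergence required is $\hat g_\delta\to\hat g$, which holds by construction, and the conjugate-heat bounds are invoked only on smooth flows. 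A minor further point: the paper tests against a smooth nonnegative $\tilde\varphi$ at time $T$ rather than the heat kernel itself, which makes the $L^\infty$ and $W^{1,2}$ estimates for $\varphi_t$ more accessible than they would be for the singular kernel.
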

{
In \cite{PBG}, Burkhardt-Guim introduced a weak scalar curvature lower bound for $C^0$-metric using Ricci flow. By comparing with the definition in \cite{PBG} and Theorem \ref{thm1.1},  we get:
\begin{corollary}
Let $M^n$ be a compact manifold with a metric $g\in W^{1,p}(M)$ $(n< p\le \infty)$. Assume the scalar curvature $R_g\ge a$ for some constant $a$ in distributional sense, then the scalar curvature $R_g\ge a$ in the sense of \cite{PBG}.
\end{corollary}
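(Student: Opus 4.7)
The plan is to deduce the corollary as a direct definitional consequence of Theorem~\ref{thm1.1}. By the Morrey embedding, $W^{1,p}(M)\subset C^0(M)$ for $p>n$, so $g$ is a continuous Riemannian metric and the weak scalar curvature notion of \cite{PBG} applies. That definition characterizes a weak lower bound $R_g\ge a$ in terms of a regularizing Ricci--DeTurck flow: loosely speaking, $R_g\ge a$ in the \cite{PBG} sense if there is an admissible Ricci--DeTurck flow $\tilde g(t)$ issuing from $g$ (smooth for $t>0$, converging to $g$ uniformly as $t\to 0^+$) such that, for every $\epsilon>0$, one has $R_{\tilde g(t)}\ge a-\epsilon$ at every point for all sufficiently small $t>0$.

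My first step would be to identify the Ricci flow $g(t)$, $t\in(0,T_0]$, produced by Simon's theorem and used in Theorem~\ref{thm1.1} with an admissible regularizing flow in the sense of \cite{PBG}. Although Theorem~\ref{thm1.1} is phrased in terms of Ricci flow, Simon's construction actually proceeds via the Ricci--DeTurck equation, and the two flows differ only by pull-back along a smooth time-dependent family of diffeomorphisms; since scalar curvature is a pointwise geometric invariant, any lower bound on $R_{g(t)}$ is equally a lower bound on the scalar curvature of the corresponding Ricci--DeTurck flow.

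The second step is to apply Theorem~\ref{thm1.1} directly: under the hypothesis $R_g\ge a$ in the distributional sense, we obtain $R_{g(t)}\ge a$ on all of $M$ for every $t\in(0,T_0]$. This is strictly stronger than the conclusion required by the \cite{PBG} definition (where any $\epsilon>0$ slack is allowed), so $R_g\ge a$ in the sense of \cite{PBG}.

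The only nontrivial point is the definitional matching in the first step---namely, confirming that Simon's flow as used in Theorem~\ref{thm1.1} qualifies as an admissible flow for \cite{PBG}'s definition. This amounts to checking the regularity and convergence properties of Simon's construction (smoothness for $t>0$, uniform convergence $g(t)\to g$ as $t\to 0^+$, and existence of an appropriate DeTurck gauge), all of which are essentially contained in Simon's theorem and in the discussion leading up to Theorem~\ref{thm1.1}. No additional geometric input beyond Theorem~\ref{thm1.1} is required.
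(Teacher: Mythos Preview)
Your proposal is correct and matches the paper's approach. The paper itself gives no separate proof of this corollary beyond the sentence ``By comparing with the definition in \cite{PBG} and Theorem \ref{thm1.1}, we get:'', so your elaboration---Morrey embedding to ensure $g\in C^0$, identification of Simon's flow with an admissible regularizing flow, and the observation that $R_{g(t)}\ge a$ from Theorem~\ref{thm1.1} is stronger than the $\epsilon$-slack required by \cite{PBG}---is exactly the intended argument spelled out in more detail.
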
}
As a consequence of Theorem \ref{thm1.1}, we can prove the following  result:

\begin{theorem}\label{thmY2}
Let $(M^n,g)$ ($n\ge 3$) be a compact manifold with $g\in W^{1,p}(M)$ $(n< p\le \infty)$. Assume the distributional scalar curvature $R_g\ge 0$, then either $(M,g)$ is isometric to a Ricci flat manifold or there exists a smooth positive scalar curvature metric on $M$.
\end{theorem}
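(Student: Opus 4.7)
The plan is to combine Theorem~\ref{thm1.1} with the classical evolution equation for the scalar curvature along a smooth Ricci flow and the parabolic strong maximum principle. By Simon's short-time existence result the initial metric $g\in W^{1,p}(M)$ admits a Ricci flow $g(t)$, $t\in(0,T_0]$, that is smooth for $t>0$ and satisfies $g(t)\to g$ as $t\to 0^+$; and by Theorem~\ref{thm1.1}, the hypothesis $R_g\ge 0$ in the distributional sense upgrades to the pointwise bound $R_{g(t)}\ge 0$ on $M$ for every $t\in(0,T_0]$.

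Along a smooth Ricci flow one has the standard identity
\begin{equation*}
\partial_t R_{g(t)}=\Delta_{g(t)} R_{g(t)}+2\,|\Ric_{g(t)}|^2,
\end{equation*}
so $R_{g(t)}$ is a nonnegative supersolution of the heat equation on the compact manifold $M$. I would then invoke the strong parabolic maximum principle to produce the following dichotomy. Either there exists $(x_1,t_1)\in M\times(0,T_0]$ with $R_{g(t_1)}(x_1)>0$, in which case the spreading of positivity for heat supersolutions forces $R_{g(t)}>0$ on all of $M$ for every $t\in(t_1,T_0]$ (any point with $R=0$ at a later time would, by the strong minimum principle, force $R\equiv 0$ on the entire parabolic past, contradicting the chosen point); any such $g(t)$ is then a smooth Riemannian metric on $M$ with strictly positive scalar curvature, and the theorem holds. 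Or else $R_{g(t)}\equiv 0$ on all of $M\times(0,T_0]$, in which case the evolution equation gives $|\Ric_{g(t)}|^2\equiv 0$, so $g(t)$ is Ricci flat and $\partial_t g(t)=-2\Ric_{g(t)}=0$, whence the flow is stationary at a single smooth Ricci flat metric $g_\infty$.

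To close the Ricci flat case I would exploit the convergence $g(t)\to g$ as $t\to 0^+$ coming from Simon's construction. Since $p>n$, the Sobolev embedding $W^{1,p}\hookrightarrow C^0$ is available, so this convergence can be interpreted uniformly; since $g(t)\equiv g_\infty$ is stationary for $t>0$, the limit must be $g_\infty$. Modulo any diffeomorphism gauge entering the short-time existence theory, this exhibits $(M,g)$ as isometric to the smooth Ricci flat manifold $(M,g_\infty)$. The main obstacle I expect is precisely this final identification: checking that the smooth stationary flow matches the rough initial datum in a strong enough sense to promote "constant flow at a Ricci flat metric" into an honest isometry with a smooth Ricci flat manifold, and tracking any DeTurck-type diffeomorphism built into Simon's construction. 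Once that is in hand, the dichotomy provided by the scalar curvature evolution and the strong maximum principle yields Theorem~\ref{thmY2}.
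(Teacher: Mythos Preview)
Your proposal is correct and follows essentially the same route as the paper: apply Theorem~\ref{thm1.1} to get $R_{g(t)}\ge 0$, use the evolution equation $\partial_t R=\Delta R+2|\Ric|^2$ together with the strong maximum principle to obtain the dichotomy, and in the Ricci flat case use stationarity of the flow and convergence back to $g$ to identify $(M,g)$ with a smooth Ricci flat manifold. The only cosmetic difference is that the paper invokes the Gromov--Hausdorff convergence of Theorem~\ref{thm66.2}(1) for the final identification (which neatly absorbs the DeTurck gauge you flagged), whereas you phrase it via $C^0$ convergence; both work.
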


\begin{remark}
By Theorem \ref{thmY2}, if $M$ is a topological $n$-torus with a metric $g\in W^{1,p}(M)$ for some $n<p\le \infty$, then $(M,g)$ is isometric to a flat torus provided that $g$ has nonnegative distributional scalar curvature. More generally, if $M$ is a differential manifold with the Yamabe invariant $\sigma(M)\le 0$ and with a metric $g$ satisfying the same condition as above, then $(M,g)$ is isometric to a Ricci flat manifold.
\end{remark}

\begin{remark}
Bourguignon, Gromov, Lawson, Schoen and Yau  have established relevant results in smooth version, see \cite{GL80}, \cite{ScYa2}, \cite{KW75}.
\end{remark}

\begin{remark}\label{rmk1.1}
We can check that if $g$ is $C^2$ away from a closed subset $\Sigma$ with its Hausdorff measure $\cH^{n-\frac{p}{p-1}}(\Sigma)<\infty$ when $n<p<\infty$ or $\cH^{n-1}(\Sigma)=0$ when $p=\infty$, and if $R_g\ge a$ pointwisely on $M\setminus\Sigma$, then $R_g\ge  a$ in distributional sense. Though it is proved in \cite{JSZ}, for the sake of completeness, we give a proof in appendix. Also, if $g$ is $C^2$ away from a hypersurface whose mean curvature taken with respect to the metric in the interior is at least it taken with respect to the metric in the exterior and $R_g\ge a$ away from the hypersurface, then  $R_g\ge  a$ in distributional sense. It has been proved in \cite[Proposition 5.1]{Lee2015a}.
\end{remark}

Therefore, as an application, we can get similar results for metrics admitting singularity in a subset, which are extensions of some results in \cite{ShTa}. Specifically, if $g$ is $C^2$ away from a closed subset $\Sigma$ with $\cH^{n-\frac{p}{p-1}}(\Sigma)<\infty$ when $n<p<\infty$ or $\cH^{n-1}(\Sigma)=0$ when $p=\infty$, and if $R_g\ge a$ or $R_g\ge 0$ pointwisely on $M\setminus\Sigma$, then the above theorems still hold.

\vskip 3mm
\noindent
\textbf{Organization:}

In section 2, we will mollify the metric by convolution, and we also provide an estimate of the weak scalar curvature in this smooth approximation (see Lemma \ref{lm2.2}).
In section 3, we recall the definition of the Ricci flow and the Ricci-DeTurck flow, and we obtain some estimates for them. The main estimate we need is Theorem \ref{mainsec6}.
In section 4, we reacall the definition of the conjugate heat equation and prove some property of the solution of this equation (see Proposition \ref{prop_varphit}), which will be needed in section 5.
In section 5, we prove Theorem \ref{thm1.1} and Theorem \ref{thmY2}.
In section 6, we state some further questions.

\vskip 3mm
\noindent
\textbf{Acknowlegments:} The authors would like to thank Prof. Dan Lee and Prof. C. Sormani for many helpful suggestions.

\section{Approximation of Singular Metrics}
Let $M^n$ be a compact smooth manifold with a metric $g\in C^0(M)\cap W^{1,2}(M)$  
 and let $h$ be a smooth metric on $M$ such that $C^{-1}h\le g\le Ch$ for some constant $C>1$.  Throughout this paper, $\tilde\nabla$ and the dot product will denote the Levi-Civita connection and the inner product taken with respect to $h$ respectively,  We define the distirbutional scalar curvature  of $g$ as in \cite{LM07,Lee2015a,LS15},
\begin{align}
	\langle R_g,\varphi\rangle :=\int_M \left(-V\cdot \tilde\nabla \left(\varphi \frac{d\mu_g}{d\mu_h}\right)+F\varphi \frac{d\mu_g}{d\mu_h}\right)d\mu_h, \forall \varphi\in C^\infty(M),
\end{align}
where $\mu_h$ is the Lebesgue measure taken with respect to $h$, $V$ is a vector field and $F$ is a scalar field, defined as:
\begin{align}
\Gamma_{ij}^k&:=\frac{1}{2}g^{kl}\left(\tilde\nabla_i g_{jl}+\tilde\nabla_j g_{il}-\tilde\nabla_l g_{ij}\right), \\
	V^k&:=g^{ij}\Gamma_{ij}^k-g^{ik}\Gamma_{ji}^j=g^{ij}g^{k\ell}(\tilde\nabla_jg_{i\ell}-\tilde\nabla_{\ell}g_{ij}),\\
	F&:= {\rm{tr}}_g\tilde{\Ric}-\tilde \nabla_kg^{ij}\Gamma_{ij}^k+\tilde\nabla_kg^{ik}\Gamma_{ji}^i+g^{ij}\left(\Gamma_{k\ell}^k\Gamma_{ij}^\ell-\Gamma_{j\ell}^k\Gamma_{ik}^\ell\right),
\end{align}
where $\tilde{\Ric}$ is the Ricci tensor of $h$. By \cite{Lee2015a}, $\langle R_g,\varphi\rangle$ coincides with the integral $\int_M R_g\varphi d\mu_g$ in classical sense if $g\in C^2(M)$ and $\langle R_g,\varphi\rangle$ is independent of $h$ for any $g\in C^0(M)\cap W^{1,2}(M)$. For more details about the distirbutional scalar curvature and related results, see \cite{Lee2015a},{\cite{LM07},\cite{LS15}}, \cite{JSZ}. 

Let $a$ be some constant, we say that the distirbutional scalar curvature  of $g$ is at least $a$ if  $\langle R_g,\varphi\rangle-a\int_M \varphi d\mu_g\ge 0$ for any nonnegative function $\varphi\in C^\infty(M)$. We will abbreviate this inequality as $\langle R_g-a,\varphi\rangle\ge 0$.

The following mollification lemma has been proved in \cite[Lemma 4.1]{grant2014positive}, by a standard convolution mollifying procedure and a use of partition of unity. Although their lemma is a $W^{2,\frac{n}{2}}$ version, our version could be proved in the same way.
\begin{lemma}[\cite{grant2014positive}]\label{lm2.1}
Let $M^n$ be a compact smooth manifold and $g$ be a $C^0\cap W^{1,p}$ ($1\le p\le\infty$) metric on $M$, then there exists a family of smooth metric $g_\delta$, $\delta>0$, such that $g_\delta$ converge to $g$ both in $C^0$-norm and in $W^{1,p}$-norm as $\delta\to 0^+$.
\end{lemma}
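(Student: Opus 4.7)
The plan is a standard convolution-mollification argument done chart by chart and glued by a partition of unity. Since $M$ is compact, I would first fix a finite atlas $\{(U_\alpha,\varphi_\alpha)\}_{\alpha=1}^N$ with $\varphi_\alpha:U_\alpha\to\RR^n$, together with pre-compactly contained open subsets $V_\alpha\Subset U_\alpha$ whose union still covers $M$, and a smooth partition of unity $\{\chi_\alpha\}$ subordinate to $\{U_\alpha\}$ with $\mathrm{supp}\,\chi_\alpha\subset V_\alpha$. In each chart the components $g_{ij}^{(\alpha)}$ of $g$ lie in $C^0\cap W^{1,p}$ on the Euclidean open set $\varphi_\alpha(U_\alpha)$, which reduces the problem to mollifying a finite family of real-valued functions defined on Euclidean neighborhoods.

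Next, for $\delta$ smaller than the Euclidean distance from $\overline{\varphi_\alpha(V_\alpha)}$ to $\partial\varphi_\alpha(U_\alpha)$, I would set $(\widetilde{g}^{(\alpha)}_\delta)_{ij}:=\eta_\delta * g_{ij}^{(\alpha)}$ on $\varphi_\alpha(V_\alpha)$, where $\eta_\delta$ is a standard Euclidean mollifier, pull back to $V_\alpha\subset M$, and define the global symmetric $(0,2)$-tensor
$$g_\delta \;:=\; \sum_{\alpha=1}^N \chi_\alpha\, \widetilde{g}^{(\alpha)}_\delta.$$
Writing $g=\sum_\alpha \chi_\alpha g$ in the same way, the difference satisfies $g_\delta-g=\sum_\alpha \chi_\alpha(\widetilde{g}^{(\alpha)}_\delta-g)$, so it suffices to prove the convergence chart-by-chart. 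Two classical facts about the Euclidean mollifier then finish the main estimate: $\eta_\delta * f\to f$ uniformly on compact sets when $f$ is continuous (using uniform continuity of $g$ on $M$), and $\eta_\delta * f\to f$ in $W^{1,p}$ when $f\in W^{1,p}$ for $1\le p<\infty$; summing against the partition of unity introduces only fixed multiplicative constants depending on the $C^1$ norms of $\chi_\alpha$ and $d\varphi_\alpha$, hence passes to the limit.

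The remaining point is to check that $g_\delta$ is genuinely a Riemannian metric. Since $g$ is continuous and positive definite on the compact manifold $M$, it is uniformly bounded below by some $\lambda I>0$ in any fixed background frame. The uniform $C^0$-convergence $g_\delta\to g$ then forces $g_\delta\geq \tfrac{1}{2}\lambda I$ for all sufficiently small $\delta$, giving positive-definiteness on all of $M$. Smoothness of $g_\delta$ is immediate from the smoothness of $\eta_\delta$ and of the partition of unity.

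The main obstacle is the endpoint $p=\infty$: convolution does not converge strongly in $L^\infty$ for a general $L^\infty$ function, so $W^{1,\infty}$-convergence of $g_\delta$ to $g$ cannot be expected in the strong sense. In this regime I would interpret the statement in the standard weak-$\ast$ sense used for Lipschitz metrics: one still has the uniform bound $\|\widetilde{g}^{(\alpha)}_\delta\|_{W^{1,\infty}}\le \|g_{ij}^{(\alpha)}\|_{W^{1,\infty}}$, uniform $C^0$-convergence, and strong $W^{1,q}$-convergence for every $q<\infty$, which is the sense in which the later approximation arguments need it. Outside this endpoint, the full strong convergence in $C^0\cap W^{1,p}$ follows from the routine Euclidean estimates described above.
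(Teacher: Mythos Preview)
Your proposal is correct and follows exactly the approach the paper indicates: the paper does not give its own detailed proof but simply cites \cite[Lemma~4.1]{grant2014positive} and remarks that the result follows ``by a standard convolution mollifying procedure and a use of partition of unity,'' which is precisely what you have written out. Your observation about the $p=\infty$ endpoint (that only uniform $W^{1,\infty}$ bounds and strong $W^{1,q}$ convergence for $q<\infty$ are actually available and actually needed) is a valid caveat the paper does not spell out.
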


Under this mollification, we have an estimate of the distirbutional scalar curvature. The following lemma has been essentially proved in \cite{JSZ}, though \cite{JSZ} only gives a proof for the estimate of $\langle R_{g_ \delta},u^2\rangle$, the following lemma can be proved in the same way. In order to be self-contained, we give a proof here.
\begin{lemma}[\cite{JSZ}]\label{lm2.2}
	Let $M^n$ be a compact smooth manifold and $g$ be a $C^0\cap W^{1,n} $ metric on $M$. Let $g_\delta$ be the mollification in Lemma \ref{lm2.1}, then we have that for any $\epsilon>0$, there exists $\delta_0=\delta_0(g)>0$, such that 
\[|\langle R_{g_ \delta},u\rangle-\langle R_g,u\rangle|\leq \epsilon\|u\|_{W^{1,\frac{n}{n-1}}(M)},\forall u\in C^\infty(M),\forall \delta\in (0,\delta_0).\]
where $R_{g_\delta}$ is the scalar curvature of $g_\delta$.
\end{lemma}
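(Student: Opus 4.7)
The plan is to compare $\langle R_{g_\delta},u\rangle$ and $\langle R_g,u\rangle$ term by term using the integral representation, and to estimate each contribution by H\"older's inequality, with Sobolev embedding used to absorb the terms that are quadratic in $\tilde\nabla g$. Write $J_\delta:=d\mu_{g_\delta}/d\mu_h$ and $J:=d\mu_g/d\mu_h$, and denote by $(V_\delta,F_\delta)$ and $(V,F)$ the vector and scalar fields appearing in the definition of the distributional scalar curvature for $g_\delta$ and $g$ respectively. Expanding $\tilde\nabla(uJ_\delta)$ and $\tilde\nabla(uJ)$, the difference splits into
\begin{align*}
\langle R_{g_\delta},u\rangle-\langle R_g,u\rangle
=&-\int_M(V_\delta J_\delta-VJ)\cdot\tilde\nabla u\,d\mu_h\\
&-\int_M u\bigl(V_\delta\cdot\tilde\nabla J_\delta-V\cdot\tilde\nabla J\bigr)d\mu_h
+\int_M u\bigl(F_\delta J_\delta-FJ\bigr)d\mu_h.
\end{align*}

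First I would separate the three kinds of expressions according to how they depend on $\tilde\nabla g$. The vector field $V$ is linear in $\tilde\nabla g$ with coefficients that are smooth functions of $g$, while $\tilde\nabla J$, $V\cdot\tilde\nabla J$ and the nonlinear part of $F$ are all quadratic in $\tilde\nabla g$ with coefficients that are smooth functions of $g$; the remaining parts of $F$ are continuous functions of $g$ alone. Since $g_\delta\to g$ in $C^0$ and in $W^{1,n}$, the coefficients depending only on $g$ converge uniformly (using $C^{-1}h\le g_\delta\le Ch$ for $\delta$ small), $V_\delta\to V$ in $L^n$, and by the elementary identity $f_\delta^2-f^2=(f_\delta-f)(f_\delta+f)$ together with H\"older the quadratic quantities in $\tilde\nabla g_\delta$ converge to their $g$-counterparts in $L^{n/2}$.

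Next I would estimate each piece. The first integral is controlled by
$$\bigl\|V_\delta J_\delta-VJ\bigr\|_{L^n(h)}\cdot\|\tilde\nabla u\|_{L^{n/(n-1)}(h)},$$
and the right factor is bounded by $\|u\|_{W^{1,n/(n-1)}}$. For the second and third integrals the factors $V_\delta\cdot\tilde\nabla J_\delta-V\cdot\tilde\nabla J$ and $F_\delta J_\delta-FJ$ lie in $L^{n/2}$ and tend to $0$ in $L^{n/2}$ as $\delta\to 0$. To pair them with $u$ I invoke the Sobolev embedding $W^{1,n/(n-1)}(M)\hookrightarrow L^{n/(n-2)}(M)$ (which is valid on the compact manifold $(M,h)$), so that by H\"older with exponents $n/2$ and $n/(n-2)$ each of those integrals is dominated by an $L^{n/2}$-norm that tends to zero, times $\|u\|_{W^{1,n/(n-1)}}$. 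Choosing $\delta_0$ small enough so that all three $L^n$- and $L^{n/2}$-norms of the differences are at most $\epsilon/3$ then yields the desired inequality.

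The main obstacle is the handling of the terms quadratic in $\tilde\nabla g$, since $\tilde\nabla g\in L^n$ places $|\tilde\nabla g|^2$ only in $L^{n/2}$, which is exactly the dual of the critical Sobolev exponent $n/(n-2)$; thus the borderline regularity $p=n$ is used sharply both in the $L^{n/2}$-convergence argument (via $(f_\delta-f)(f_\delta+f)$ with both factors in $L^n$) and in the Sobolev embedding that allows $u\in W^{1,n/(n-1)}$ to pair with these quadratic quantities. Once this critical pairing is set up, the other steps are routine.
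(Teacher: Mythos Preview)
Your proposal is correct and follows essentially the same approach as the paper: both arguments rest on $V_\delta\to V$ in $L^n$, the quadratic quantities (including $F_\delta$ and $\tilde\nabla J_\delta$) converging in $L^{n/2}$, and the critical Sobolev embedding $W^{1,n/(n-1)}\hookrightarrow L^{n/(n-2)}$ to pair $u$ with these $L^{n/2}$ factors via H\"older. The only difference is organizational---you split the difference by expanding $\tilde\nabla(uJ)$ first (grouping according to whether $\tilde\nabla u$ or $u$ appears), whereas the paper first separates the $V$-integral from the $F$-integral and then inserts and subtracts $V_\delta$, $J_\delta$ inside each; the resulting estimates are identical.
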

 
\begin{proof}
Let $h$ be a smooth metric on $M$ with $C^{-1}h<g<Ch$, here and below $C$ will denote some positive constant which depends on $n$ and the Sobolev constant of $g$, but independent of $\delta$ and varis from line to line. Let $V_\delta$ and $F_\delta$ be the vector field and scalar field  in the definition of distirbutional scalar curvature of $g_\delta$. Then we have 
\begin{align}
	\lim_{\delta\to0^{+}}\left(\int_M|V_\delta-V|^nd\mu_h+\int_M|F_\delta-F|^{n/2}d\mu_h\right)=0.
\end{align}	
We also have
\begin{align}
	\lim_{\delta\to0^{+}} \int_M\left|\tilde\nabla \frac{d\mu_{g_\delta}}{d\mu_h}-\tilde \nabla \frac{d\mu_{g}}{d\mu_h}\right|^n d\mu_h =0.
\end{align}	

Using triangular inequality and H\"older inequality, we can calculate that 
\begin{align*}
	&\Big|\int_M F_\delta u  \frac{d\mu_{g_\delta}}{d\mu_h}d\mu_h-\int_M Fu \frac{d\mu_{g}}{d\mu_h}d\mu_h\Big|\\
	&\le \int_M \Big| F_\delta u \frac{d\mu_{g_\delta}}{d\mu_h}-Fu \frac{d\mu_{g}}{d\mu_h}\Big|d\mu_h\\
	&\le \int_M|F_\delta u -Fu |\Big|\frac{d\mu_{g_\delta}}{d\mu_h}\Big| d\mu_h+\int_M |Fu| \Big| \frac{d\mu_{g_\delta}}{d\mu_h}-\frac{d\mu_{g}}{d\mu_h}\Big|d\mu_h\\
	&\le C \int_M|F_\delta u -Fu | d\mu_h+\sup_M\Big| \frac{d\mu_{g_\delta}}{d\mu_h}-\frac{d\mu_{g}}{d\mu_h}\Big| \int_M |Fu| d\mu_h\\
&\le C\left(\int_M |F_\delta-F|^{n/2}d\mu_h \right)^{2/n}\left(\int_M|u|^{ n/(n-2)}d\mu_h\right)^{(n-2)/n}+\sup_M\Big| \frac{d\mu_{g_\delta}}{d\mu_h}-\frac{d\mu_{g}}{d\mu_h}\Big| \left(\int_M |F|^{n/2}d\mu_h\right)^{2/n}\left(\int_M |u|^{ n/(n-2)}d\mu_h\right)^{(n-2)/n}.
\end{align*}	
By Sobolev inequality
\[\left(\int_M |u|^{ n/(n-2)}d\mu_h\right)^{(n-2)/n}\le C\|u\|_{W^{1,\frac{n}{n-1}}(M)},\]
we get that
\begin{align*}
&\Big|\int_M F_\delta u  \frac{d\mu_{g_\delta}}{d\mu_h}d\mu_h-\int_M Fu \frac{d\mu_{g}}{d\mu_h}d\mu_h\Big|\\
&\le \left(C\left(\int_M |F_\delta-F|^{n/2}d\mu_h \right)^{2/n}+C\sup_M\Big| \frac{d\mu_{g_\delta}}{d\mu_h}-\frac{d\mu_{g}}{d\mu_h}\Big| \left(\int_M |F|^{n/2}d\mu_h\right)^{2/n}\right) \|u\|_{W^{1,\frac{n}{n-1}}(M)}.
\end{align*}

	Similarly, for the term involving $V$, we can calculate that
	\begin{align}
		\Big|\int_M &V\cdot \tilde\nabla \left(u\frac{d\mu_g}{d\mu_h}\right)d\mu_h-\int_M V_\delta\cdot \tilde\nabla \left(u\frac{d\mu_{g_\delta}}{d\mu_h}\right)d\mu_h\Big|\\
		\le & \int_M |V-V_\delta|\cdot\Big|\tilde\nabla \left(u\frac{d\mu_{g_\delta}}{d\mu_h}\right)\Big|d\mu_h+\int_M |V|\cdot\Big|\tilde\nabla \left(u\frac{d\mu_{g}}{d\mu_h}-u\frac{d\mu_{g_\delta}}{d\mu_h}\right)\Big|d\mu_h\\
		\le & \left(\int_M|V-V_\delta|^nd\mu_h\right)^{1/n}\left(\int_M\Big|\tilde\nabla \left(u\frac{d\mu_{g_\delta}}{d\mu_h}\right)\Big|^{n/(n-1)}d\mu_h\right)^{(n-1)/n}\\
		&+ \left(\int_M|V|^nd\mu_h\right)^{1/n}\left(\int_M\Big|\tilde\nabla \left(u\frac{d\mu_{g_\delta}}{d\mu_h}-u\frac{d\mu_{g}}{d\mu_h}\right)\Big|^{n/(n-1)}d\mu_h \right)^{(n-1)/n}.
	\end{align}
	Using Sobolev inequality and H\"older inequality again, we have
	\begin{align}
		\int_M&\Big|\tilde\nabla \left(u\frac{d\mu_{g_\delta}}{d\mu_h}\right)\Big|^{n/(n-1)}d\mu_h\\
		\le &C(n)\int_M\left|\tilde\nabla u \frac{d\mu_{g_\delta}}{d\mu_h}\right|^{n/(n-1)}d\mu_h+C(n)\int_M\left| u\tilde\nabla \frac{d\mu_{g_\delta}}{d\mu_h}\right|^{n/(n-1)}d\mu_h\\
		\le & C\int_M |\tilde\nabla u|^{n/(n-1)} d\mu_h+C\left(\int_M |u|^{ n/(n-2)}d\mu_h\right)^{(n-2)/(n-1)}\left(\int_M|\tilde\nabla \frac{d\mu_{g_\delta}}{d\mu_h}|^nd\mu_h\right)^{1/(n-1)}\\
		\le &C\int_M |\tilde\nabla u|^{n/(n-1)} d\mu_h+C\|u\|_{W^{1,\frac{n}{n-1}}(M)}^{n/(n-1)}\left(\int_M|\tilde\nabla \frac{d\mu_{g_\delta}}{d\mu_h}|^nd\mu_h\right)^{1/(n-1)}\\
		\le &C\left(1+\int_M|\tilde\nabla \frac{d\mu_{g_\delta}}{d\mu_h}|^nd\mu_h\right)^{1/(n-1)} \|u\|_{W^{1,\frac{n}{n-1}}(M)}^{n/(n-1)},
	\end{align}
	and we also have, 
	\begin{align}
	\int_M&\Big|\tilde\nabla \left(u\frac{d\mu_{g_\delta}}{d\mu_h}-u\frac{d\mu_{g}}{d\mu_h}\right)\Big|^{n/(n-1)}d\mu_h\\
	\le & C\sup_M 	\Big|\frac{d\mu_{g_\delta}}{d\mu_h}- \frac{d\mu_{g}}{d\mu_h}\Big|^{n/(n-1)}\int_M |\tilde\nabla u|^{n/(n-1)} d\mu_h\\
	&+ C \left(\int_M |u|^{ n/(n-2)}d\mu_h\right)^{(n-2)/(n-1)}\left(\int_M|\tilde\nabla \left(\frac{d\mu_{g_\delta}}{d\mu_h}-\frac{d\mu_{g}}{d\mu_h}\right)|^nd\mu_h\right)^{1/(n-1)}\\
	\le &C\left(\sup_M 	\Big|\frac{d\mu_{g_\delta}}{d\mu_h}- \frac{d\mu_{g}}{d\mu_h}\Big|^{n/(n-1)}
	+ \left(\int_M|\tilde\nabla \left(\frac{d\mu_{g_\delta}}{d\mu_h}-\frac{d\mu_{g}}{d\mu_h}\right)|^nd\mu_h\right)^{1/(n-1)}\right)\|u\|_{W^{1,\frac{n}{n-1}}(M)}^{n/(n-1)}.
	\end{align}
	We combine these estimates, and then we get
	\begin{align}
		\Big|\int_M& V\cdot \tilde\nabla \left(u\frac{d\mu_g}{d\mu_h}\right)d\mu_h-\int_M V_\delta\cdot \tilde\nabla \left(u\frac{d\mu_{g_\delta}}{d\mu_h}\right)d\mu_h\Big|\\
		\le & C\left(\int_M|V_\delta-V|^nd\mu_h\right)^{1/n}\left( 1+\left(\int_M|\tilde\nabla \frac{d\mu_{g_\delta}}{d\mu_h}|^nd\mu_h\right)^{1/n}\right)\|u\|_{W^{1,\frac{n}{n-1}}(M)}\\
		&+ C\left( \sup_M 	\Big|\frac{d\mu_{g_\delta}}{d\mu_h}-\frac{d\mu_{g}}{d\mu_h}\Big| 
	+  \left(\int_M|\tilde\nabla \left(\frac{d\mu_{g_\delta}}{d\mu_h}-\frac{d\mu_{g}}{d\mu_h}\right)|^nd\mu_h\right)^{1/n}\right)\|u\|_{W^{1,\frac{n}{n-1}}(M)}.
	\end{align}
	Therefore, for any $\epsilon>0$, there exists $\delta_0>0$ small enough, such that
	\begin{align}
		|\langle R_{g_\delta},u\rangle-\langle R_g,u\rangle|\le \epsilon \|u\|_{W^{1,\frac{n}{n-1}}(M)}, \forall u\in C^\infty(M), \forall \delta\in (0,\delta_0),
	\end{align}
	whcih completes the proof of the lemma.
	
\end{proof}

\section{Estimates on Ricci flow}\label{sec2.3}
The Ricci flow was introduced by Hamilton\cite{Hamilton1982}. Its definition is as follows:
\begin{definition}[Ricci flow]The Ricci flow on $M$ is a family of time dependent metrics $g(t)$ such that \[\frac{\partial}{\partial t}g(t)=-2\Ric_{g(t)},\] where ${\Ric_{g}}$ is the Ricci curvature tensor of $g$.  
\end{definition}
The main theorem in this section is the following:
\begin{theorem}\label{thm66.2} \label{mainsec6}
There exists an $\epsilon(n)>0$ such that, for any compact $n$-manifold $M$ with a $ W^{1,p}$ metric $\hat g$ with $n<p\le +\infty$, there exists a $T_0=T_0(n,g)>0$ and a family of metrics $g(t)\in C^{\infty}(M\times(0,T_0]),t\in (0,T_0]$ which solves Ricci flow for $t\in(0,T_0]$, and satisfies
\begin{itemize}
\item[(1)]$\lim_{t\to 0}d_{GH}((M,g(t)),(M,\hat g))=0.$
\item[(2)] $|{\rm{Rm}}(g(t))|(t)\leq \frac{C(n,\hat g,p)}{t^{\frac{n}{4p}+\frac{3}{4}}}$, $\forall t\in(0,T_0]$.
\item[(3)] $\int_0^{T_0}\int_M |{\rm{Rm}}( g(t))|^2d\mu_{g(t)}dt\le C(n,\hat g,p)$,
\end{itemize}
where $C(n,\hat g,p)$ is a positive constant independent of $t$.
\end{theorem}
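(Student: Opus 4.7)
The plan is to construct $g(t)$ by smoothly approximating $\hat g$, obtaining uniform a priori estimates via the Ricci--DeTurck flow, and extracting a limit. First I would apply Lemma~\ref{lm2.1} to produce smooth metrics $g_\delta$ converging to $\hat g$ in $C^0 \cap W^{1,p}$ as $\delta \to 0^+$; for each $g_\delta$, Hamilton's short-time existence theorem furnishes a smooth Ricci flow $g_\delta(t)$ on some maximal interval $[0, T_\delta)$. The content of Theorem~\ref{mainsec6} is then that $T_\delta$ admits a uniform lower bound and that the estimates (2) and (3) hold with constants independent of $\delta$, so that a smooth subsequential limit can be extracted on $M \times (0, T_0]$.

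To uniformize the existence time and verify (1), I would fix a smooth background metric $h$ that is uniformly $C^0$-close to every $g_\delta$ (for example $h = g_{\delta_0}$ for some small fixed $\delta_0$) and pass to the Ricci--DeTurck flow with background $h$, which is strictly parabolic for the metric tensor. Following Simon's strategy, a maximum-principle argument applied to $|g_\delta(t) - h|_h$ yields a $\delta$-independent $T_0 = T_0(n, \hat g, p) > 0$ on which $g_\delta(t)$ remains uniformly bi-Lipschitz to $h$, giving $T_\delta \ge T_0$ and implying (1) via $C^0$-continuity of the flow in the initial datum combined with $g_\delta \to \hat g$ in $C^0$.

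For the pointwise decay (2), I would exploit the initial $W^{1,p}$-bound on $g_\delta$ through a parabolic interpolation: the Ricci--DeTurck equation has the schematic form $\partial_t g = \Delta_h g + Q(g, \nabla g)$ with $Q$ quadratic in $\nabla g$, and an $L^p$--$L^\infty$ smoothing estimate followed by Shi-type higher-derivative bounds propagates $\|\nabla \hat g\|_{L^p}$ into a pointwise bound $|\Rm(g_\delta(t))| \le C t^{-3/4 - n/(4p)}$. The exponent reflects a two-step interpolation (one step upgrading $W^{1,p}$-control of $\nabla g$ to $L^\infty$-control via the Morrey embedding $W^{1,p} \hookrightarrow C^{1 - n/p}$, then a further parabolic step to reach $\Rm$), and reduces at $p = \infty$ to the Lipschitz endpoint $t^{-3/4}$. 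For the integral bound (3), a pointwise integration in $t$ is not sharp enough when $p < \infty$ since the exponent exceeds one; instead I would derive an energy identity for $\int_M |\nabla_h(g_\delta(t) - h)|_h^2\,d\mu_h$ along the Ricci--DeTurck flow whose dissipation term controls $\int_0^{T_0}\int_M |\Rm|^2\,d\mu_{g_\delta}\,dt$ in terms of the initial $W^{1,2}$-norm of $\hat g - h$ (finite since $p > n \ge 2$) together with lower-order terms already under control.

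The main technical obstacle is establishing the sharp exponent $n/(4p) + 3/4$ in (2): this requires carefully tracking the propagation of the $L^p$-integrability of $\nabla \hat g$ through the quasilinear Ricci--DeTurck flow in a scale-invariant manner, refining Simon's $C^0$-based estimate $|\Rm| \le C/t$ so as to capture the additional regularity of $W^{1,p}$ data. Once the uniform estimates are in hand, the limit $g(t)$ is obtained by Arzel\`a--Ascoli together with Shi's local derivative estimates, and the Ricci--DeTurck solution is converted back to a genuine Ricci flow by pulling back through the inverse DeTurck diffeomorphism; the Gromov--Hausdorff convergence (1) then follows from the uniform bi-Lipschitz control and the $C^0$-convergence $g_\delta \to \hat g$.
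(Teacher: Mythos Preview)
Your proposal is correct and follows essentially the paper's approach: reduce to the $h$-flow (Ricci--DeTurck flow), invoke Simon's existence and $C^0$-closeness result for (1), obtain (2) via a parabolic $L^p$--$L^\infty$ smoothing of $|\tilde\nabla g|$ (Moser iteration, yielding $|\tilde\nabla g|\le C t^{-n/(2p)}$) followed by interpolation against Simon's Shi-type bound $|\tilde\nabla^3 g|\le C t^{-3/2}$, and derive (3) from an energy identity for $\int_M|\tilde\nabla g|^2\,d\mu_h$. One small correction: the exponent $n/(4p)+3/4$ comes not from Morrey embedding but precisely from that interpolation, schematically $|\tilde\nabla^2 g|\lesssim (|\tilde\nabla g|\cdot|\tilde\nabla^3 g|)^{1/2}\lesssim t^{-n/(4p)-3/4}$.
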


\begin{remark}
In this paper, we assume that $T_0\le 1$ for convenience.
\end{remark}
\begin{remark}
The existence of $T_0$ and $g(t)$ and (1) have been proved by Simon (see Theorem \ref{thm2.2}). Shi-Tam \cite{ShTa} also got similar estimates as (2). Here we give a proof by using Moser's iteration. 
\end{remark}
 To prove Theorem \ref{mainsec6}. We consider $h$-flow (see \cite{Si02}). 
\begin{definition}
Given a constant $\delta\geq 0$, a metric $h$ is called to be $(1+\delta)$-fair to $g$, if $h$ is $C^\infty$, 
\[\sup_M|\tilde\nabla^j {\rm{Rm}}(h)|=k_j<\infty,\]
and \[(1+\delta)^{-1}h\leq g\leq (1+\delta) h \quad on\ M.\]
\end{definition}
Here and below, $\tilde \nabla$ means the covariant derivative taken with respect to $h$.

\begin{remark}\label{rmkh}
Let $M$ be a compact manifold and $g$ be a $C^0$ metric on $M$, then for any $0<\epsilon<1$, there exists a smooth metric $h$ which is $(1+\epsilon)$-fair to $g$. For a proof, see the remarks below \cite[Definition 1.1]{Si02}.
\end{remark}
\begin{definition}\label{dfn8}[$h$-flow] Given a background smooth metrics $h$, the $h$-flow is a family of metrics $g$ satisfies \[\frac{\partial}{\partial t}g_{ij}=-2R_{ij}+ \nabla_iV_j+ \nabla_jV_i,\]
where the derivatives are taken with respect to $g$, \[V_j=g_{jk}g^{pq}(\Gamma_{pq}^k-\tilde\Gamma_{pq}^k),\]
and $\Gamma$ and $\tilde\Gamma$ are the Christoffel symbols of $g$ and $h$ respectively. 
\end{definition}

$h$-flow is equivalent to the Ricci flow modulo an action of diffeomorphisms (see \cite{Si02}), thus we only need to prove Theorem \ref{mainsec6} for $h$-flow. Firstly we need the following theorem, which has been proved by Simon\cite{Si02}.

\begin{theorem} (\cite[Theorem 1.1]{Si02})\label{thm2.2}
There exists an $\epsilon(n)>0$ such that, for any compact $n$-manifold $M$ with a complete $C^0 $ metric $\hat g$ and a $C^\infty$ metric $h$ which is $(1+\frac{\epsilon(n)}{2})$-fair to $\hat g$, there exists a $T_0=T_0(n,k_0)>0$ and a family of metrics $g(t)\in C^{\infty}(M\times(0,T_0]),t\in (0,T_0]$ which solves $h$-flow for $t\in(0,T_0]$, $h$ is $(1+\epsilon(n))$-fair to $g(t)$, and
\begin{itemize}
\item[(1)] \begin{align*}
\lim_{t\to0^+}\sup_{x\in M}|g(x,t)-\hat g(x)|=0,
\end{align*}
\item[(2)] \begin{align*}
\sup_{M}|\tilde\nabla^ig(t)|\leq\frac{c_i(n,k_1,...,k_i)}{t^{i/2}}, \forall t\in(0,T_0],i\ge 1,\end{align*}
\end{itemize}
where the derivatives and the norms are taken with respect to $h$.

\end{theorem}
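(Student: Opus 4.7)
The strategy is the classical Bernstein-type parabolic regularization. First approximate $\hat g$ by smooth metrics $\hat g_k \to \hat g$ in $C^0(M)$; since the $(1+\delta)$-fairness condition is open in $C^0$, the background $h$ remains $(1+\tfrac{\epsilon(n)}{2})$-fair to every $\hat g_k$ for $k$ sufficiently large. Written in local coordinates, the $h$-flow of Definition \ref{dfn8} is a quasilinear strongly parabolic system with principal part $g^{ab}\tilde\nabla_a \tilde\nabla_b$, so short-time smooth existence of a flow $g_k(t)$ starting from $\hat g_k$ is standard. The theorem then reduces to four substeps, uniform in $k$: (i) a common lifetime $T_0 = T_0(n,k_0) > 0$ on which $(1+\epsilon(n))^{-1} h \le g_k(t) \le (1+\epsilon(n)) h$ persists; (ii) derivative bounds $|\tilde\nabla^i g_k(t)| \le c_i(n,k_1,\ldots,k_i)\, t^{-i/2}$; (iii) extraction of a smooth limit $g(t)$; and (iv) $C^0$-convergence $g(t) \to \hat g$ as $t \to 0^+$.

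For (i), set $\phi = |g-h|_h^2$. Using the schematic identity $\partial_t g = g^{ab}\tilde\nabla_a\tilde\nabla_b g + \tilde\nabla g \ast \tilde\nabla g + \Rm(h) \ast g$, one checks that while $\phi$ remains small, $\partial_t \phi \leq \Delta_{g_k(t)} \phi + C(n,k_0)(1+\phi)$. The parabolic maximum principle then bounds $\phi(t) \leq \epsilon(n)^2$ on $[0,T_0]$ for an explicit $T_0 = T_0(n,k_0) > 0$, which yields both the persistence of fairness and a uniform ellipticity constant for $g_k^{ab}$. For (ii), I would iteratively construct Bernstein-type auxiliary quantities of the form $F_i = t^i |\tilde\nabla^i g|^2 + A_i \sum_{j<i} t^j |\tilde\nabla^j g|^2$ with constants $A_i = A_i(n,k_0,\ldots,k_i)$ chosen large enough to absorb cross terms, and verify that each $F_i$ satisfies a parabolic differential inequality which, combined with (i) and the maximum principle, forces a uniform bound $F_i \le C_i(n,k_0,\ldots,k_i)$ on $M \times [0,T_0]$. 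The derivative bound is read off directly.

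Step (iii) is Ascoli-Arzelà plus a diagonal extraction on $M \times [\tau, T_0]$ with $\tau \downarrow 0$, using (ii) to guarantee smooth precompactness; the subsequential limit $g(t) \in C^\infty(M \times (0,T_0])$ solves the $h$-flow and inherits all estimates. For (iv), the $C^0$-convergence $g_k(t) \to \hat g_k$ as $t \to 0^+$ follows from integrating the $h$-flow equation in $t$ together with (i)--(ii) to obtain uniform-in-$k$ control of $\sup_M |g_k(t) - \hat g_k|_h$; combined with $\hat g_k \to \hat g$ uniformly and the subsequential $C^\infty_{\mathrm{loc}}$-convergence $g_k(t) \to g(t)$, a diagonal argument yields $\sup_M |g(t) - \hat g|_h \to 0$. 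The main technical obstacle is the Bernstein estimate (ii): the auxiliary $F_i$ has to be engineered so that every lower-order term in the $i$-fold differentiated $h$-flow equation, including all contributions from $\tilde\nabla^j \Rm(h)$ for $j \leq i$, is either absorbed by intrinsic dissipation (from the uniform ellipticity of (i) and the time-weights $t^i$) or dominated by an admissible constant, which is precisely why the $c_i$ must depend on the entire family $k_0, k_1, \ldots, k_i$.
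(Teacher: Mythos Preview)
The paper does not prove this theorem; it is quoted verbatim from Simon \cite[Theorem~1.1]{Si02} and used as a black box (the sentence preceding the statement reads ``the following theorem, which has been proved by Simon''). There is therefore no proof in the paper to compare your proposal against. Your outline is in fact a faithful sketch of Simon's original argument: smooth approximation of the initial data, a maximum-principle step to keep $g(t)$ uniformly close to $h$ on a definite time interval, iterated Bernstein-type estimates with time-weighted quantities for the higher derivatives, and Arzel\`a--Ascoli extraction of the limit. The paper's Remark~\ref{limitf} records precisely your approximation step (i)/(iii)/(iv) as the mechanism by which the $h$-flow from the $C^0$ initial metric is obtained.
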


\begin{remark}
In order to apply the flow, we will let $h$ be $(1+\frac{\epsilon(n)}{2})$-fair to $\hat g$. By Remark \ref{rmkh}, such a metric always exists. 
\end{remark}

\begin{remark}\label{limitf}\cite{Si02}
Actually, take any family of smooth metrics $\{\hat g_\delta\}$ which converges to $\hat g$ uniformly on $M$ in $C^0$-norm, then $h$ is $1+\frac{\delta(n)}{2}$ fair to $\hat g_\delta$ for $\delta$ small enough. Starting from smooth metrics $\hat g_\delta$ we get $h$-flow $g_\delta(t)$, $t\in (0,T_0]$ with $T_0$ independent of $\delta$. Fix $t>0$ and let $\delta\to0^+$, by passing to a subsequence, we get $g(t)$, which is appeared to be the $h$-flow such that $g(0)=\hat g$ as in Theorem \ref{thm2.2}, and this convergence is a smooth convergence for each $t\in(0,T_0]$. For Ricci flow, the same procedure still works.
\end{remark}

When the initial metric is $W^{1,p}$, $n<p\le+\infty$, we have the following estimate: 
\begin{theorem}\label{thm6.2}
In the condition of Theorem \ref{thm2.2}, and moreover, if we assume that $M$ is compact and $\int_M|\tilde\nabla \hat g|^pd\mu_h<A$ for some constant $A$ and $n<p\le+\infty$, where the derivative and the norm are taken with respect to $h$. Then there exists a $T_0=T_0(n,h,A,p)$, such that $g(t)$, $ t\in(0,T_0]$ is the $h$-flow starting from metric $\hat g$, and
\begin{itemize}
\item[(1)]$\int_M|\tilde\nabla g(t)|^pd\mu_h\leq10A$, $\forall t\in(0,T_0]$,
\item[(2)]$|\tilde\nabla g|(t)\leq \frac{C(n,h,A,p)}{t^{\frac{n}{2p}}}$, $\forall t\in(0,T_0]$,
\item[(3)]$|\tilde\nabla^2 g|(t)\leq \frac{C(n,h,A,p)}{t^{\frac{n}{4p}+\frac{3}{4}}}$, $\forall t\in(0,T_0],$
\end{itemize}
where $C(n,h,A,p)$ is a positive constant independent of $t$.
\end{theorem}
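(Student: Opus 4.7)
The plan is to reduce everything to an energy analysis of the $h$-flow, which is equivalent to the Ricci flow modulo diffeomorphism (see \cite{Si02}), and for which existence on $(0,T_0]$ and the Simon-type a priori bound $|\tilde\nabla g|(t) \le c_1\, t^{-1/2}$ are already furnished by Theorem \ref{thm2.2}. The workhorse is the evolution inequality derived as in \cite{Si02},
\begin{equation*}
(\partial_t - \Delta_{g(t)})|\tilde\nabla g|^2 \le -c_0|\tilde\nabla^2 g|^2 + C(h)\bigl(1+|\tilde\nabla g|^4\bigr),
\end{equation*}
where all constants depend only on $n$ and bounds on the curvature of $h$, and $\Delta_{g(t)}$ is the rough Laplacian with respect to $g(t)$. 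The three items (1)--(3) will be extracted from this single inequality using successively finer tools: an $L^p$ energy estimate with a continuity argument, parabolic Moser iteration, and a Bernstein-type maximum principle.

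For (1), set $F(t) := \int_M |\tilde\nabla g|^p\, d\mu_h$. Multiplying the evolution inequality by $\tfrac{p}{2}|\tilde\nabla g|^{p-2}$ and integrating against $d\mu_h$, using that $d\mu_{g(t)}/d\mu_h \sim 1$ by $(1+\epsilon(n))$-fairness, integration by parts together with Kato's inequality $\bigl|\tilde\nabla|\tilde\nabla g|^{p/2}\bigr|^2 \le \tfrac{p^2}{4}|\tilde\nabla g|^{p-2}|\tilde\nabla^2 g|^2$ and the Sobolev inequality applied to $v := |\tilde\nabla g|^{p/2}$ produces the differential inequality
\begin{equation*}
F'(t) \le -c\!\int_M |\tilde\nabla g|^{p-2}|\tilde\nabla^2 g|^2\, d\mu_h + C\!\int_M|\tilde\nabla g|^{p+2}\, d\mu_h + CF(t) + C.
\end{equation*}
The critical term $\int|\tilde\nabla g|^{p+2}\,d\mu_h$ is then split via a H\"older decomposition with exponents $n/(n-2)$ and $n/2$ (using $p\ge n$) into a multiple of the good term above (absorbable when $F$ is controlled) plus a remainder of the form $CF(t)^{1+2/p}$. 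The resulting inequality $F'(t)\le C F(t)^{1+2/p} + CF(t) + C$ with $F(0) < A$ yields, via a standard continuity argument on the level set $\{F\le 10A\}$, the bound $F(t)\le 10 A$ on a subinterval $(0,T_0]$ with $T_0 = T_0(n,h,A,p)$.

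With (1) established, item (2) follows by parabolic Moser iteration applied to $u := |\tilde\nabla g|^2$, which now satisfies $(\partial_t - \Delta_g)u \le Cu^2 + C$ together with the uniform $L^{p/2}$ bound $\int_M u^{p/2}\,d\mu_h \le 10A$ on $(0,T_0]$. Iterating the Sobolev-based $L^{q_k}\!\to\! L^{\kappa q_k}$ bootstrap (with parabolic Sobolev exponent $\kappa = (n+2)/n$) starting from $q_0 = p/2$ produces a pointwise estimate $\sup_M u(t) \le C(n,h,A,p)\, t^{-n/p}$, equivalent to $|\tilde\nabla g|(t)\le C\, t^{-n/(2p)}$. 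For (3), I would apply the Bernstein technique to the evolution equation for $|\tilde\nabla^2 g|^2$, which has schematic form
\begin{equation*}
(\partial_t - \Delta_g)|\tilde\nabla^2 g|^2 \le -c|\tilde\nabla^3 g|^2 + C\bigl(1+|\tilde\nabla g|^2\bigr)|\tilde\nabla^2 g|^2 + C|\tilde\nabla g|^4|\tilde\nabla^2 g| + C,
\end{equation*}
and analyze the maximum of an auxiliary function of the form $\Phi := t^{\alpha}|\tilde\nabla^2 g|^2 + K t^{\beta}|\tilde\nabla g|^2$ with exponents $\alpha,\beta$ tuned so that, after substituting the pointwise bound (2), the parabolic maximum principle forces exactly the claimed decay rate $t^{-n/(4p) - 3/4}$.

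The main technical obstacle lies in step (1): the right-hand side of the energy identity contains the supercritical term $\int|\tilde\nabla g|^{p+2}\,d\mu_h$, which is one power of $|\tilde\nabla g|$ beyond what $F(t)$ directly controls. The naive route of estimating it using Simon's a priori bound $|\tilde\nabla g|(t) = O(t^{-1/2})$ introduces a non-integrable $1/t$ factor in Gronwall's inequality and fails to preserve $L^p$ integrability up to $t=0$. One must instead trade regularity against integrability via Kato plus Sobolev on $v = |\tilde\nabla g|^{p/2}$ combined with the H\"older split indicated above; it is precisely the hypothesis $p > n$ that allows the resulting exponent matching to close the argument and produce a differential inequality whose solution remains bounded on a uniform time interval.
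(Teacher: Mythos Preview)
Your strategy is sound and would work, but the paper organizes the argument differently in two places, and in both cases its route is simpler.

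For (1)--(2), you first close the $L^p$ energy estimate directly (via Kato, Sobolev, and an interpolation/Young split of $\int|\tilde\nabla g|^{p+2}$), landing on a superlinear ODE $F'\le CF^{1+\alpha}+CF+C$, and only afterwards run Moser iteration. The paper \emph{reverses} this order: it first proves the Moser bound $|\tilde\nabla g|(t)\le C t^{-n/(2p)}$ \emph{conditionally} on the $L^p$ bound (this is their Lemma~\ref{lm3}), and then feeds that conditional pointwise estimate back into the energy identity. With the pointwise bound in hand, the dangerous factor $v=|\tilde\nabla g|^2+2$ in $\phi'(t)\le C\int f^{p/2}v$ is simply $\le Ct^{-n/p}$, yielding the \emph{linear} ODE $\phi'\le Ct^{-n/p}\phi$ whose coefficient is integrable at $t=0$ precisely because $p>n$. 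This sidesteps the delicate absorption you describe and makes the continuity argument for $T_0=T_0(n,h,A,p)$ immediate. Your approach works too, but note that the exponent in the remainder after Young should be $1+\tfrac{2}{p-n}$ rather than $1+\tfrac{2}{p}$; this does not affect the conclusion.

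For (3), your Bernstein auxiliary function $\Phi=t^\alpha|\tilde\nabla^2 g|^2+Kt^\beta|\tilde\nabla g|^2$ is heavier machinery than needed. The paper instead uses an elementary interpolation: combine the first-order bound $|\tilde\nabla g|\le Ct^{-n/(2p)}$ from (2) with Simon's a priori bound $|\tilde\nabla^3 g|\le Ct^{-3/2}$ from Theorem~\ref{thm2.2}. Integrating a component of $\tilde\nabla g$ along a coordinate segment of length $s$ and applying the mean value theorem twice gives
\[
|\tilde\nabla^2 g|(t)\ \le\ \frac{C}{s\,t^{n/(2p)}}\ +\ \frac{Cs}{t^{3/2}},
\]
and optimizing over $s$ (taking $s\sim t^{3/4-n/(4p)}$, which is admissible for $t$ small since $p>n$) yields exactly $|\tilde\nabla^2 g|\le Ct^{-n/(4p)-3/4}$. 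This explains the particular exponent $\tfrac{n}{4p}+\tfrac{3}{4}=\tfrac12\bigl(\tfrac{n}{2p}+\tfrac{3}{2}\bigr)$ as the geometric mean of the two available rates, and avoids any further PDE analysis.
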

To prove Theorem \ref{thm6.2}, we prove the following lemma at first.
\begin{lemma}\label{lm3}
In the condition of Theorem \ref{thm6.2}, if for some $T \in(0,1]$, there holds $\int_M|\tilde\nabla g(t)|^pd\mu_h\leq10A$, $\forall t\in(0,T ]$, then for the same $T$, $|\tilde\nabla g|(t)\leq \frac{C(n,h,A,p)}{t^{\frac{n}{2p}}}$, $\forall t\in(0,T ]$ also holds, where $C(n,h,A,p)$ is a positive constant independent of $t$.
\end{lemma}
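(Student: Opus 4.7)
The plan is to derive an $L^\infty$ bound for $v := |\tilde\nabla g|_h^2$ from the given $L^{p/2}$ bound via a parabolic Moser iteration, using Simon's pointwise estimate $v \le c_1^2/t$ from Theorem \ref{thm2.2}(2) as a crude input to linearize the critical quadratic nonlinearity appearing in the evolution of $v$.

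The first step is to write down the differential inequality for $v$. Under the $h$-flow, the standard Bochner-type computation (as in \cite{Si02}) yields
\[
(\partial_t - \Delta_g) v \;\le\; -|\tilde\nabla^2 g|_h^2 + C_0(n,h)\bigl(1 + v + v^2\bigr).
\]
The $v^2$ term is critical for Moser, but using $v \le c_1^2/t$ one may bound $v^2 \le (c_1^2/t)\,v$, so the inequality becomes linear in $v$ with a singular-in-time zero-order coefficient.

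The second step is the Moser iteration. For each $q \ge p/2$, multiply the inequality by $q v^{q-1}$ and integrate over $M$. Since $h$ is $(1+\epsilon(n))$-fair to $g(t)$ on $(0,T]$, the measures $d\mu_g$ and $d\mu_h$ as well as the gradient norms $|\cdot|_g$ and $|\cdot|_h$ are mutually comparable with $n$-dependent constants, so after integration by parts we arrive at an energy inequality of the form
\[
\frac{d}{dt}\int_M v^q\, d\mu_h + c(q)\int_M |\tilde\nabla v^{q/2}|_h^2\, d\mu_h \;\le\; \Bigl(\tfrac{C_1(n,h)\,q}{t} + C_2(n,h)\,q\Bigr)\int_M v^q\, d\mu_h.
\]
Combined with the Sobolev inequality on $(M,h)$, this is a standard parabolic De Giorgi--Nash--Moser energy inequality whose only unusual feature is an integrable $1/t$ singular coefficient. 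Iterating along $q_k = (p/2)\gamma^k$ with $\gamma = (n+2)/n$, starting from the seed $\int_M v^{p/2}\, d\mu_h \le 10A$, the usual shrinking-time-interval Moser scheme yields
\[
\sup_{x \in M} v(x,t) \;\le\; \frac{C(n,h,A,p)}{t^{n/p}}, \qquad t \in (0,T],
\]
which, after taking square roots, is exactly the desired bound.

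The main obstacle is the critical character of the quadratic nonlinearity $v^2$ relative to the available $L^{p/2}$ control: at the scaling level, $v \in L^{p/2}$ with $p/2 > n/2$ is only just subcritical for the heat operator, and without further input the Moser iteration cannot absorb the $v^2$ term. Simon's crude bound $v \le c_1^2/t$ is precisely what converts $v^2$ into a linear term with coefficient $1/t$; because $p > n$, the resulting exponent $n/p$ is strictly less than $1$, which is exactly what keeps the telescoped Moser constants finite and reproduces the sharp parabolic smoothing rate $t^{-n/(2p)}$. A secondary bookkeeping point is that one must consistently exchange $\Delta_g$ for quantities measured by $h$ throughout the iteration, but this is routine given the fairness of $h$ to $g(t)$ provided by Theorem \ref{thm2.2}.
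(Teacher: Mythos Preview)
Your approach is correct and reaches the same bound, but handles the quadratic nonlinearity differently from the paper. You linearize $|\tilde\nabla g|^4$ via Simon's a priori estimate $|\tilde\nabla g|^2 \le c_1^2/t$ from Theorem~\ref{thm2.2}(2), producing a singular $1/t$ zero-order coefficient; since the Moser iteration runs on intervals $[\tau_k,T']$ with $\tau_k \ge T'/2$, one simply bounds $1/t \le 2/T'$ there and the telescoping proceeds as in the linear case, giving $\sup_M v(T')\le C(T')^{-n/p}$ after combining with the $L^{p/2}$ seed. The paper, by contrast, never invokes Simon's pointwise bound: setting $f = |\tilde\nabla g|^2 + 1$, it controls the bad term $\int_M f^{q+1}(f+1)\,d\mu_h$ by H\"older against the assumed $L^{p/2}$ bound on $f+1$, then interpolates the resulting $L^{p/(p-2)}$ norm of $f^{q+1}$ between $L^1$ and $L^{n/(n-2)}$ --- this interpolation is precisely where $p>n$ enters --- and absorbs the $L^{n/(n-2)}$ piece into the gradient term via the Sobolev inequality, yielding a time-independent iteration constant $Cq^{1+\mu}$ with $\mu=(p-n)/n$. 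The paper's route is thus self-contained (independent of Theorem~\ref{thm2.2}(2)) and makes the role of $p>n$ explicit; your route is shorter once Simon's bound is in hand. One small correction: in your linearized scheme the Moser telescoping converges for any $q_0>0$ (the sums $\sum 1/q_k$ and $\sum k/q_k$ are geometric), so your remark that $p>n$ is what ``keeps the telescoped Moser constants finite'' is not accurate --- in your version of the iteration $p>n$ plays no role.
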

\begin{proof}
By \cite{Shi1989}, we have the evolution equation of $|\tilde\nabla g_{ij}|^2$:
\begin{align*}
\frac{\partial}{\partial t}|\tilde\nabla g_{ij}|^2=&g^{\alpha\beta}\tilde\nabla_{\alpha}\tilde\nabla_{\beta}|\tilde\nabla g_{ij}|^2-2g^{\alpha\beta}\tilde\nabla_\alpha(\tilde\nabla g_{ij})\cdot\tilde\nabla_\beta(\tilde\nabla g_{ij})\\
&+\text{\~{Rm}}*g^{-1}*g^{-1}*g*\tilde\nabla g*\tilde\nabla g+g^{-1}*g*\tilde\nabla \text{\~{Rm}}*\tilde\nabla g\\
&+g^{-1}*g^{-1}*\tilde\nabla g*\tilde\nabla g*\tilde\nabla^2 g+g^{-1}*g^{-1}*g^{-1}*\tilde\nabla g*\tilde\nabla g*\tilde\nabla g*\tilde\nabla g,
\end{align*}
where the derivatives and norms are taken with respect to $h$, and \~{Rm} is the Riemannian curvature tensor of $h$.
Thus
\begin{align*}
\frac{\partial}{\partial t}|\tilde\nabla g_{ij}|^2-g^{\alpha\beta}\tilde\nabla_{\alpha}\tilde\nabla_{\beta}|\tilde\nabla g_{ij}|^2&\leq -C_1(n,h)|\tilde\nabla^2 g|^2\notag\\
&\quad+C(n,h)|\tilde\nabla g|^2+C(n,h)|\tilde\nabla g|\notag\\
&\quad+C(n)|\tilde\nabla g|^4+C_2(n)|\tilde\nabla g|^2|\tilde\nabla^2 g|\notag\\
&\leq -C_1(n,h)|\tilde\nabla^2 g|^2\notag\\
&\quad+C(n,h)|\tilde\nabla g|^2+C(n,h)|\tilde\nabla g|\notag\\
&\quad+C(n)|\tilde\nabla g|^4+\frac{C_2(n)}{2\epsilon}|\tilde\nabla g|^4+\frac{C_2(n)\epsilon}{2}|\tilde\nabla^2 g|^2, \forall \epsilon>0,
\end{align*}
here and below $C$ and $C_i$s are positive constants independent of $t$ and $C$ varies from line to line. Take $\epsilon=\frac{C_1}{C_2}$, we have
\begin{align}\label{ine5.1}
\frac{\partial}{\partial t}|\tilde\nabla g_{ij}|^2-g^{\alpha\beta}\tilde\nabla_{\alpha}\tilde\nabla_{\beta}|\tilde\nabla g_{ij}|^2&\leq -C(n,h)|\tilde\nabla^2 g|^2+C(n,h)(|\tilde\nabla g|+|\tilde\nabla g|^2+|\tilde\nabla g|^4)\\
&\leq C(n,h)(|\tilde\nabla g|+|\tilde\nabla g|^2+|\tilde\nabla g|^4).
\end{align}
Denote $f=|\tilde\nabla g_{ij}|^2+1$, then we have
\[\frac{\partial}{\partial t}f-g^{\alpha\beta}\tilde\nabla_\alpha\tilde\nabla_\beta f\leq C(n,h)(f+f^{\frac{1}{2}}+f^2)\leq C(n,h)f(1+f)\]
Denote $v=1+f$, then 
\begin{equation}\label{evof}\frac{\partial}{\partial t}f-g^{\alpha\beta}\tilde\nabla_\alpha\tilde\nabla_\beta f\leq C(n,h)fv.
\end{equation}
 Suppose that $T\in(0,1]$ is a constant such that $\int_M|\tilde\nabla g(t)|^pd\mu_h\leq10A$, $\forall t\in(0,T]$. Then $v$ has uniformly bounded $L^{\frac{p}{2}}{(M)}$-norm on $[0,T]$, in other words, we have
\[\int_Mv^{\frac{p}{2}}d\mu_h\leq C(n,h)A+C(n,h), \forall t\in[0,T].\]

For any $q> 0$, we multiple $  f^{q}$ to equation (\ref{evof}) and integrate it, then we get
\begin{equation}
\int_M\left(\frac{\partial}{\partial t}f-g^{\alpha\beta}\tilde\nabla_\alpha\tilde\nabla_\beta f\right) f^{q}d\mu_h\leq C(n,h)\int_Mf^{q+1}v d\mu_h.
\end{equation}
Thus we have
\begin{align}\label{a}
\frac{1}{q+1}\frac{\partial}{\partial t}\int_M f^{q+1}d\mu_h\leq
C(n,h)&\left[-\int_M(\tilde\nabla_\alpha g^{\alpha\beta})(\tilde\nabla_\beta f) f^{q}d\mu_h\notag\right.\\
&-\int_M g^{\alpha\beta}(\tilde\nabla_\beta f)(\tilde\nabla_\alpha  f^{q}) d\mu_h\notag\\
&\left.+\int_Mf^{q+1}v d\mu_h\right].
\end{align}
By H\"older inequality, we have
\begin{align}\label{b}
-\int_M(\tilde\nabla_\alpha g^{\alpha\beta})(\tilde\nabla_\beta f) f^{q}d\mu_h&\leq C(n,h)\int_Mf^{\frac{1}{2}}|\tilde\nabla f| f^{q}d\mu_h\notag\\
&\leq C_3\epsilon\int_M|\tilde\nabla f|^2 f^{q-1}d\mu_h\notag\\
&\quad +\frac{C_3}{\epsilon}\int_M f^{q+2}d\mu_h,\forall\epsilon>0.
\end{align}
And we have
\begin{align}\label{c}
\quad -\int_M g^{\alpha\beta} \tilde\nabla_\beta f \tilde\nabla_\alpha  f^{q} d\mu_h&=-q\int_M (g^{\alpha\beta}\tilde\nabla_\beta f\tilde\nabla_\alpha f)f^{q-1} d\mu_h\notag\\
&\leq -C_4 q\int_M|\tilde\nabla f|^2f^{q-1} d\mu_h.
\end{align}
Recall that $v=f+1$ and hence $f^{q+2}\leq f^{q+1}v$. Take $\epsilon=\frac{C_4 q}{2C_3}$ in (\ref{b}) and combine inequality (\ref{a}), (\ref{b}) and (\ref{c}), then we get
\begin{align}
\quad\frac{1}{q+1}\frac{\partial}{\partial t}\int_M f^{q+1}d\mu_h+C(n,h)q\int_M|\tilde\nabla f|^2f^{q-1} d\mu_h\leq C(n,h)\left(1+\frac{1}{q}\right)\int_Mf^{q+1}v d\mu_h.
\end{align}
Since
\begin{align*}
\quad\int_M\big|\tilde\nabla f^{\frac{q+1}{2}}  \big|^2d\mu_h
=\left(\frac{q+1}{2}\right)^2 \int_M\big|\tilde\nabla f\cdot f^{\frac{q-1}{2}} \big|^2d\mu_h,
\end{align*}
we get
\begin{align}\label{5.7}
&\quad\frac{1}{q+1}\frac{\partial}{\partial t}\int_M f^{q+1}d\mu_h+C(n,h)\frac{q}{(q+1)^2}\int_M\big|\tilde\nabla f^{\frac{q+1}{2}} \big|^2d\mu_h\notag\\
&\leq  C(n,h)\frac{q+1}{q}\int_Mf^{q+1}v d\mu_h.
\end{align}
If we let $q\geq\frac{p}{2}-1>0$, then $0<\frac{p-2}{p}\le \frac{q}{q+1}\leq1$,
thus we have
\begin{align}\label{8}
&\quad\frac{\partial}{\partial t}\int_M f^{q+1}d\mu_h+\int_M\big|\tilde\nabla f^{\frac{q+1}{2}} \big|^2d\mu_h \leq  C_5(n,h,p)q\int_Mf^{q+1}v d\mu_h.
\end{align}
For the last term, use H\"older inequality and since $p>n$ we can use interpolation inequality, then we get
\begin{align}\label{9}
\int_Mf^{q+1}v d\mu_h&\leq \left(\int_Mv^{\frac{p}{2}}d\mu_h\right)^{\frac{2}{p}}\left(\int_M(f^{q+1} )^{\frac{p}{p-2}}d\mu_h\right)^{\frac{p-2}{p}}\notag\\
&\leq C_6(n,h,A)\left[\epsilon\left(\int_M(f^{q+1} )^{\frac{n}{n-2}}d\mu_h\right)^{\frac{n-2}{n}}+\epsilon^{-\mu}\int_Mf^{q+1} d\mu_h\right],
\end{align}
where $\mu=(\frac{n-2}{n}-\frac{p-2}{p})/(\frac{p-2}{p}-1)=\frac{p-n}{n}$.
We also have the Sobolev inequality
\begin{equation}\label{10}\int_M\big|\tilde\nabla f^{\frac{q+1}{2}} \big|^2d\mu_h\geq C_7(n,h)\left(\int_M(f^{q+1} )^{\frac{n}{n-2}}d\mu_h\right)^{\frac{n-2}{n}}.
\end{equation}
Take $\epsilon=\frac{C_7}{2C_5C_6q}$ and combine (\ref{8}), (\ref{9}) and (\ref{10}), then we get
\begin{align}\label{15}
\frac{\partial}{\partial t}\int_Mf^{q+1}d\mu_h+\left(\int_M(f^{q+1})^{\frac{n}{n-2}}d\mu_h\right)^{\frac{n-2}{n}}\notag\\
\leq C(n,h,A,p)q^{1+\mu} \int_Mf^{q+1}d\mu_h.
\end{align}
For any $0<t'<t''<T'\leq T\leq1$, let
\[\psi(t)=\left\{
\begin{array}{rl}
0, & \text{if } 0\leq t\leq t',\\
\frac{t-t'}{t''-t'}, & \text{if } t'\leq t\leq t'',\\
1, & \text{if } t''\leq t\leq T.
\end{array} \right. \]
Multiplying (\ref{15}) by $\psi$, we get
\begin{align}\label{16}
\frac{\partial}{\partial t}\int_M\psi f^{q+1}d\mu_h+\psi\left(\int_M(f^{q+1})^{\frac{n}{n-2}}d\mu_h\right)^{\frac{n-2}{n}}\notag\\
\leq[ C(n,h,A,p)q^{1+\mu} \psi+\psi']\int_Mf^{q+1}d\mu_h.
\end{align}
Integrating it with respect to $t$, we get
\begin{align}
\sup_{t\in[t'',T']} \int_Mf^{q+1}d\mu_h+\int_{t''}^{T'}\left(\int_M(f^{q+1})^{\frac{n}{n-2}}d\mu_h\right)^{\frac{n-2}{n}}dt\notag\\
\leq[ C(n,h,A,p)q^{1+\mu} +\frac{1}{t''-t'}]\int_{t'}^{T'}\int_Mf^{q+1}d\mu_hdt.
\end{align}
Then we have
\begin{align}\label{14}
&\quad\int_{t''}^{T'}\int_Mf^{(q+1)(1+\frac{2}{n})}d\mu_hdt\notag\\
&\leq \int_{t''}^{T'}\left(\int_Mf^{q+1}d\mu_h\right)^{\frac{2}{n}}\left(\int_Mf^{(q+1)\frac{n}{n-2}}d\mu_h\right)^{\frac{n-2}{n}}dt\notag\\
&\leq\sup_{t\in[t'',{T'}]}\left(\int_Mf^{q+1}d\mu_h\right)^{\frac{2}{n}}\int_{t''}^{T'}\left(\int_Mf^{(q+1)\frac{n}{n-2}}d\mu_h\right)^{\frac{n-2}{n}}dt\notag\\
&\leq[ C(n,h,A,p)q^{1+\mu} +\frac{1}{t''-t'}]^{1+\frac{2}{n}}\left(\int_{t'}^{T'}\int_Mf^{q+1}d\mu_hdt\right)^{1+\frac{2}{n}},\forall q\ge\frac{p}{2}-1.
\end{align}
Denote \[H(q, \tau)=\left(\int_{\tau}^{T'}\int_{M}f^{q}d\mu_hdt\right)^{\frac{1}{q}},\forall q\geq\frac{p}{2},  0<\tau<T'.\]
Then equation (\ref{14}) can be shortly writen as
\[H(q(1+\frac{2}{n}), t'')\leq[ {C(n,h,A,p)}q^{1+\mu}+\frac{1}{t''-t'}]^\frac{1}{q} H(q, t').\]
Fix $0<t_0<t_1<T'\leq1$, $q_0\geq\frac{p}{2}$ and set $\chi=1+\frac{2}{n}$, $q_k=q_0\chi^k$, $\tau_k=t_0+(1-\frac{1}{\chi^k})(t_1-t_0)$. Then we have (see also \cite{Jiang}, \cite{JWZ})
\begin{align*}H(q_{k+1}, \tau_{k+1})&\leq[ C(n,h,A,p) q_k^{1+\mu}+\frac{1}{t_1-t_0}\frac{\chi}{\chi-1}\chi^k]^\frac{1}{q_k} H(q_k, \tau_k)\\
&= [C(n,h,A,p)q_0^{1+\mu}\chi^{k(1+\mu)}+\frac{1}{t_1-t_0}\frac{n+2}{2}\chi^k]^\frac{1}{q_k}H(q_k, \tau_k)\\
&\leq [\frac{C(n,h,A,p,q_0)}{t_1-t_0}]^\frac{1}{q_k}\chi^{\frac{k(1+\mu)}{q_k}} H(q_k, \tau_k),
\end{align*}
where in the last inequality we use $0<t_0<t_1<T$.
By iteration, we get
\begin{align*}
H(q_{m+1}, \tau_{m+1})
&\leq [\frac{C(n,h,A,p,q_0)}{t_1-t_0}]^{\sum_{k=0}^m\frac{1}{q_k}}\chi^{\sum_{k=0}^m\frac{k(1+\mu)}{q_k}} H(q_0, \tau_0)\\
&\leq C(n,h,A,p,q_0)(\frac{1}{t_1-t_0})^{\frac{n+2}{2q_0}}H(q_0, \tau_0),
\end{align*}
since $\sum_{k=0}^\infty\frac{1}{q_k}=\frac{n+2}{2q_0}$ and $\sum_{k=0}^\infty\frac{k(1+\mu)}{q_k}$ converges.
Letting $m\to\infty$, we get
\[H(p_\infty, \tau_\infty)\leq C(n,h,A,p,q_0)(\frac{1}{t_1-t_0})^{\frac{n+2}{2q_0}}H(q_0, \tau_0), \forall q_0\geq\frac{p}{2},\]
where $p_\infty=+\infty,\tau_\infty=t_1$.
Letting $q_0=\frac{p}{2}$, we have
\[H(\infty, t_1)\leq C(n,h,A,p)(\frac{1}{t_1-t_0})^{\frac{n+2}{p}}H(\frac{p}{2}, t_0).\]
Thus we have
\[\sup_{(y,t)\in M\times[t_1,T']}f(y,t)\leq C(n,h,A,p)\frac{1}{(t_1-t_0)^{\frac{n+2}{p}}}\left(\int_{t_0}^{T'}A^{\frac{p}{2}}dt\right)^{\frac{2}{p}}.\]
Letting $t_1\to T'$ and $t_0=T'/2$, we get
\[\sup_{y\in M} f(y,T')\leq C(n,h,A,p)\frac{1}{(T')^{\frac{n}{p}}},\forall T'\in(0,T].\]
Thus we have
\[|\tilde\nabla g|(t)\leq \frac{C(n,h,A,p)}{t^{\frac{n}{2p}}}, \forall t\in(0,T],\]
which completes the proof of the lemma.
\end{proof}

\begin{proof}[Proof of Theorem \ref{thm6.2}]
The existence of the $h$-flow $g(t)$ is claimed in Theorem \ref{thm2.2}, so we only need to prove that conclusions (1), (2) and (3) hold for some $T_0(n,h,A,p)$.

To prove that (1) and (2) hold for some $T_0(n,h,A,p)$, let $f=|\tilde\nabla g_{ij}|^2+1$.\\
We denote 
\[\phi(t)= \int_Mf^{\frac{p}{2}} d\mu_h.\]
Then we have
\begin{align}
\phi'(t)=\int_M\frac{\partial}{\partial t}(f^{\frac{p}{2}} ) d\mu_h.
\end{align}

We denote 
\[\mathcal{T}=\bigg\{T\in(0,1]\big|\int_M|\tilde\nabla g(t)|^pd\mu_h\leq10A,\forall x\in M, \forall t\in[0,T]\bigg\},\]
and $T_{\max}=\sup\mathcal{T}$.
Take $q=p/2-1$, then equation (\ref{8}) gives
\[\phi'(t)\leq C(n,h,p)\int_Mf^{p/2}v d\mu_h,\]
where $v=f+1$.
By Lemma \ref{lm3}, we have 
\begin{align*}
v=f+1\le 1+\frac{C(n,h,A,p)}{t^{n/p}}, \forall t\in(0,T_{\max}].
\end{align*}
Thus we get 
\begin{equation}
\phi'(t)\leq \left(1+\frac{C(n,h,A,p)}{t^{n/p}}\right)\int_Mf^{p/2} d\mu_h, \forall t\in(0,T_{\max}].
\end{equation}
Since $T_{\max}\leq 1$,
we have
\[\phi'(t)\leq \frac{C(n,h,A,p)}{t^{n/p}}\phi(t), \forall t\in(0,T_{\max}],\]
thus we have
\[(\log\phi(t))' \leq\frac {C(n,h,A,p)}{t^{n/p}}, \forall t\in(0,T_{\max}].\]
By integration, we get
\[\log\phi(t)\leq \log\phi(0)+C(n,h,A,p)t^{1-n/p}, \forall t\in(0,T_{\max}],\]
thus we have
\[\phi(t)\leq\phi(0)e^{C(n,h,A,p)t^{1-n/p}}, \forall t\in(0,T_{\max}],\]
Since $\phi(0)\leq C(n,h)A$, we have
\begin{equation}\label{21}\int_M|\tilde\nabla g(t)|^pd\mu_h\leq \phi(t)\leq C(n,h)e^{C(n,h,A,p)t^{1-n/p}}A, \forall t\in(0,T_{\max}].
\end{equation}
Suppose that if we fix $n$, $h$, $p$ and $A$, there exists a sequence of initial metric $\{\hat g_m\}_{m=1}^\infty$, such that each of the metric $\hat g_m$ satisfies the condition of the theorem and the corresponding $T_{\max;m}$ tends to $0$. But by (\ref{21}) we know that if $T_{\max}$ satisfies $C(n,h)e^{C(n,h,A,p)T_{\max}^{1-n/p}}\leq 5$ and $T_{\max}<1$, then the maximal time interval $(0,T_{\max}]$ could be extended to $(0,T_{\max}+\delta]$, for some $\delta>0$ small enough, which is a contradiction. Hence $T_{\max}$ is only depend on $n$, $h$, $p$ and $A$. Thus we get that the conclusion (1) and (2) hold simultaneously for some $T_0(n,h,A,p)$.

To prove (3), recall that Simon's result, Theorem \ref{thm2.2}, gives
\[\sup_{M}|\tilde\nabla^ig(t)|\leq\frac{c_i(n,h)}{t^{i/2}},\quad\forall t\in(0,T_0].\]
We choose a finite atlas for $M$ such that $h$ is uniformly equivalent to the Euclidean metric of
each chart. For any chart $(U,\Phi)$, we let $\bar f$ denote any component function $\tilde\nabla_i g_{jk}(t)$ of $\tilde\nabla g(t)$, and let $h_0$ denote the Euclidean metric of $(U,\Phi)$. We can assume that $2^{-1}h_0\le h \le 2 h_0$. We choose an arbitrary point $p\in U$ and let $\gamma(u)$ be a curve satisfying $\gamma(0)=p$ and $\gamma ' (u)\equiv\frac{\partial}{\partial x^i}$, where $\frac{\partial}{\partial x^i}$ is the coordinate vector field of $(\Phi, U)$. Then we have
\begin{align*}
\int_0^s \frac{d}{du} \bar f\comp\gamma(u) du=\bar f\comp\gamma(s)-\bar f(p),\forall s\in(0,s_0),
\end{align*}
where $s_0$ is a positive constant independent of the chart and $t$, and $s_0$ is small enough such that $\gamma(u)\in U$ for any $u\in(0,s_0)$ and for any coordinate neighborhood $U$.\\
On the one hand, we have 
\begin{align*}
\int_0^s \frac{d}{du} \bar f\comp\gamma(u) du=\int_0^s d\bar f|_{ \gamma(u) } (\gamma '(u))du= \int_0^s  \frac{\partial \bar f}{\partial x^i}(\gamma(u)) du.
\end{align*}
On the other hand, we have
\begin{align*}
\bar f\comp\gamma(s)-\bar f(p)\le  2\sup_M |\tilde\nabla g(t)|\le \frac{C(n,h,A,p)}{t^{\frac{n}{2p}}}.
\end{align*} 
Thus there exists $u_0\in(0,s)$, such that 
\begin{align*}
\frac{\partial \bar f}{\partial x^i}(\gamma(u_0))\le \frac{C(n,h,A,p)}{st^{\frac{n}{2p}}}.
\end{align*}
Since 
\begin{align*}\frac{\partial \bar f}{\partial x^i}(\gamma(u_0))-\frac{\partial \bar f}{\partial x^i}(\gamma(0))&=\int_0^{u_0}\frac{\partial^2 \bar f}{(\partial x^i)^2}(\gamma(u))du\\
&\ge -u_0 C(n,h)\sup_M|\tilde\nabla^3 g(t)|\\
&\ge -s \frac{C(n,h)}{t^{\frac{3}{2}}}.
\end{align*}
We have 
\begin{align}\label{i1}
\frac{\partial \bar f}{\partial x^i}(p)=\frac{\partial \bar f}{\partial x^i}(\gamma(0))
&\le \frac{\partial \bar f}{\partial x^i}(\gamma(u_0))+s \frac{C(n,h)}{t^{\frac{3}{2}}}\notag\\
&\le \frac{C(n,h,A,p)}{st^{\frac{n}{2p}}}+s \frac{C(n,h)}{t^{\frac{3}{2}}}.
\end{align}
The right hand side of (\ref{i1}) attains its minimum for $s=C(n,h,A,p)t^{\frac{3}{4}-\frac{n}{4p}}$. Note that $\lim_{t\to 0^+}t^{\frac{3}{4}-\frac{n}{4p}}=0$, thus there exists $T_0=T_0(n,h,A,p)$, such that for any $t\in(0,T_0)$, we have $C(n,h,A,p)t^{\frac{3}{4}-\frac{n}{4p}}\le s_0$.
Therefore, we have that for any $t\in(0,T_0)$, the right hand side of (\ref{i1}) attains its minimum for some $s\in(0,s_0)$, and the minimum value is $\frac{C(n,h,A,p)}{t^{\frac{3}{4}+\frac{n}{4p}}}$.

Since $p$ is an arbitrary point on $M$ and $2^{-1}h_0\le h \le 2 h_0$, we get
\[|\tilde\nabla^2 g|(t) 
\leq \frac{C(n,h,p,A)}{t^{\frac{n}{4p}+\frac{3}{4}}}.\]
Thus (3) holds for some $T_0(n,h,A,p)$, which  completes the proof of the theorem.
\end{proof}

Theorem \ref{mainsec6} (2) follows immediately from Theorem \ref{thm6.2}. Moreover, for Theorem \ref{mainsec6} (3), we just need to prove the following lemma:
\begin{lemma}\label{lmR}
In the condition of Theorem \ref{thm6.2}, we have
\begin{align*}
\int_0^{T_0}\int_M |\tilde \nabla^2 g(t)|^2d\mu_hdt\le C(n,h,A,p).
\end{align*}
\end{lemma}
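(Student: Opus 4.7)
The plan is to integrate the pointwise evolution inequality for $|\tilde\nabla g|^2$ derived in the proof of Lemma \ref{lm3} (see \eqref{ine5.1}) in both space and time, using the coercive term $-C_1(n,h)|\tilde\nabla^2 g|^2$ on the right-hand side to produce the required $L^2$-bound on $\tilde\nabla^2 g$.

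First, I would integrate \eqref{ine5.1} over $M$ with respect to $d\mu_h$. The only nontrivial term is the ``Laplacian'' $g^{\alpha\beta}\tilde\nabla_\alpha\tilde\nabla_\beta |\tilde\nabla g|^2$, which does not integrate to zero because the coefficients are $g$ rather than $h$. Integrating by parts against $d\mu_h$ on the closed manifold $(M,h)$ and using that $|\tilde\nabla_\alpha g^{\alpha\beta}| \le C(n,h)|\tilde\nabla g|$ (which follows from the uniform equivalence of $g$ and $h$ given by Theorem \ref{thm2.2}), together with $|\tilde\nabla |\tilde\nabla g|^2| \le 2|\tilde\nabla g||\tilde\nabla^2 g|$, produces an error term bounded by $C(n,h)\int_M |\tilde\nabla g|^2|\tilde\nabla^2 g|\, d\mu_h$. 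A weighted Cauchy--Schwarz absorbs half of the coercive term and leaves a remainder proportional to $\int_M |\tilde\nabla g|^4\, d\mu_h$. The outcome is
\begin{align*}
\frac{d}{dt}\int_M |\tilde\nabla g|^2\, d\mu_h + \frac{C_1(n,h)}{2}\int_M |\tilde\nabla^2 g|^2\, d\mu_h \le C(n,h)\int_M \bigl(|\tilde\nabla g| + |\tilde\nabla g|^2 + |\tilde\nabla g|^4\bigr) d\mu_h.
\end{align*}

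Next, I would integrate in time from $t_0 > 0$ to $T_0$ and send $t_0 \to 0^+$. The boundary term $\int_M |\tilde\nabla g(t_0)|^2\, d\mu_h$ is uniformly bounded via H\"older's inequality on the compact manifold from the conservation-type estimate $\int_M |\tilde\nabla g(t)|^p\, d\mu_h \le 10A$ of Theorem \ref{thm6.2}(1). The same H\"older argument controls the time integrals of $\int_M |\tilde\nabla g|\, d\mu_h$ and $\int_M |\tilde\nabla g|^2\, d\mu_h$. For the quartic term, I would combine the pointwise estimate $|\tilde\nabla g|(t) \le C(n,h,A,p)\, t^{-n/(2p)}$ from Theorem \ref{thm6.2}(2) with the uniform $L^2$-in-space bound:
\begin{align*}
\int_0^{T_0}\!\!\int_M |\tilde\nabla g|^4\, d\mu_h\, dt \le \int_0^{T_0} \sup_M |\tilde\nabla g|^2 \cdot \!\int_M |\tilde\nabla g|^2\, d\mu_h\, dt \le C(n,h,A,p)\!\int_0^{T_0}\! t^{-n/p}\, dt,
\end{align*}
which is finite since $p > n$ forces $n/p < 1$.

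The main obstacle is the integration-by-parts step: because the evolution operator is $g^{\alpha\beta}\tilde\nabla_\alpha\tilde\nabla_\beta$ rather than the $h$-Laplacian, the spatial integral of this term does not vanish and instead yields a cross term $\sim |\tilde\nabla g|^2|\tilde\nabla^2 g|$ that competes with the coercive good term $|\tilde\nabla^2 g|^2$. Handling this requires a weighted Cauchy--Schwarz, and the $|\tilde\nabla g|^4$ remainder it produces is precisely what forces one to invoke the pointwise $t^{-n/(2p)}$ estimate; the hypothesis $p > n$ is exactly what makes the time integral $\int_0^{T_0} t^{-n/p}\, dt$ converge, closing the argument.
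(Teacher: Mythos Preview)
Your argument is correct, but the route differs from the paper's. You integrate by parts \emph{once} in the term $\int_M g^{\alpha\beta}\tilde\nabla_\alpha\tilde\nabla_\beta|\tilde\nabla g|^2\,d\mu_h$, pick up a cross term $\sim|\tilde\nabla g|^2|\tilde\nabla^2 g|$, absorb via Cauchy, and then control the resulting quartic term $\int_M|\tilde\nabla g|^4$ by combining the pointwise decay $|\tilde\nabla g|\le Ct^{-n/(2p)}$ of Theorem~\ref{thm6.2}(2) with the uniform $L^2$ bound from~(1); integrability in time follows since $n/p<1$. The paper instead integrates by parts \emph{twice}, obtaining a term $\int_M(\tilde\nabla_\beta\tilde\nabla_\alpha g^{\alpha\beta})f\,d\mu_h\le C\int_M|\tilde\nabla^2 g|f\,d\mu_h$, applies Cauchy to reach $\int_M fv$, and then uses H\"older plus interpolation (between $L^1$ and $L^{n/(n-2)}$) plus Sobolev to reabsorb the Sobolev term into the good term $\int_M|\tilde\nabla^2 g|^2$ itself (via $|\tilde\nabla f^{1/2}|\le|\tilde\nabla^2 g|$). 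This yields the cleaner differential inequality $\tfrac{d}{dt}\int_M f + \int_M|\tilde\nabla^2 g|^2\le C$, with the right-hand side constant in $t$. Your approach is more elementary in that it avoids redoing the Sobolev/interpolation machinery and simply invokes the estimates already established in Theorem~\ref{thm6.2}; the paper's approach is more self-contained (it needs only~(1), not the pointwise~(2)) and gives a slightly sharper inequality, though both lead to the same conclusion.
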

\begin{proof}
We calculate as the proof of Lemma \ref{lm3}, but this time we preserve the $|\tilde\nabla^2 g|^2$ term in (\ref{ine5.1}). Integrate both sides of the inequality (\ref{evof}), then we have
\begin{align}
\frac{\partial}{\partial t}\int_M fd\mu_h+\int_M |\tilde\nabla^2 g|^2d\mu_h\notag
&\leq C(n,h)\int_M g^{\alpha\beta}\tilde\nabla_\alpha\tilde\nabla_\beta fd\mu_h+ C(n,h)\int_Mfv d\mu_h,
\end{align}
where $f=|\tilde\nabla g_{ij}|^2+1$ and $v=f+1$, here and below $C$ and $C_i$s denote a positive constant and $C$ varies from line to line. Using integration by parts twice, we have 
\begin{align*}
\frac{\partial}{\partial t}\int_M fd\mu_h+\int_M |\tilde\nabla^2 g|^2d\mu_h\notag
&\leq C(n,h)\int_M \tilde\nabla_\beta \tilde\nabla_\alpha g^{\alpha\beta} fd\mu_h+ C(n,h)\int_Mfv d\mu_h\\ 
&\le C_1(n,h)\int_M |\tilde\nabla^2 g| fd\mu_h+ C(n,h)\int_Mfv d\mu_h\\ 
&\le C_1(n,h)\epsilon \int_M |\tilde\nabla^2 g|^2 d\mu_h+C_1(n,h)\epsilon^{-1}\int_M f^2d\mu_h+ C(n,h)\int_Mfv d\mu_h,\forall\epsilon>0. 
\end{align*}
Taking   $\epsilon=\frac{C_1}{2}$, since $f\le v$, we have
\begin{align}\label{5.21}
 \frac{\partial}{\partial t}\int_M f d\mu_h +\int_M |\tilde\nabla^2 g|^2d\mu_h \leq  C_2(n,h) \int_Mf v d\mu_h.
\end{align}
For the last term, using H\"older inequality and interpolation inequality, we get
\begin{align}\label{5.22}
\int_Mf v d\mu_h&\leq \left(\int_Mv^{\frac{p}{2}}d\mu_h\right)^{\frac{2}{p}}\left(\int_M f^{\frac{p}{p-2}}d\mu_h\right)^{\frac{p-2}{p}}\notag\\
&\leq C_3(n,h,A)\left[\epsilon\left(\int_M f ^{\frac{n}{n-2}}d\mu_h\right)^{\frac{n-2}{n}}+\epsilon^{-\mu}\int_M f d\mu_h\right],
\end{align}
where $\mu=(\frac{n-2}{n}-\frac{p-2}{p})/(\frac{p-2}{p}-1)=\frac{p-n}{n}$.
We also have the Sobolev inequality
\begin{equation}\label{5.23} \left(\int_M f ^{\frac{n}{n-2}}d\mu_h\right)^{\frac{n-2}{n}} \le C_4(n,h)\int_M\left|\tilde\nabla f^{\frac{ 1}{2}} \right|^2d\mu_h.
\end{equation}
Since 
\begin{align}\label{5.24}
\tilde\nabla f^{\frac{1}{2}}
=\tilde\nabla(|\tilde\nabla g|^2+1)^\frac{1}{2}\notag
=\frac{\tilde\nabla \langle \tilde\nabla g,\tilde\nabla g\rangle}{2(|\tilde\nabla g|^2+1)^\frac{1}{2}}
&=\frac{2\langle \tilde\nabla^2 g,\tilde\nabla g\rangle}{2(|\tilde\nabla g|^2+1)^\frac{1}{2}}\notag\\
&\le \frac{|\tilde\nabla^2 g||\tilde\nabla g|}{(|\tilde\nabla g|^2+1)^\frac{1}{2}} \le |\tilde\nabla^2 g|,
\end{align}
Taking $\epsilon=\frac{1}{2C_2C_3C_4}$, by (\ref{5.21}), (\ref{5.22}), (\ref{5.23}) and (\ref{5.24}), we have
 \begin{align*} 
 \frac{\partial}{\partial t}\int_M f d\mu_h +\int_M |\tilde\nabla^2 g|^2d\mu_h \leq  C(n,h,A,p) \int_Mf  d\mu_h.
\end{align*}
By Theorem \ref{thm6.2}, $\int_M|\tilde\nabla g(t)|^pd\mu_h\leq10A$, $\forall t\in(0,T_0]$. Since $p>n\ge 2$, we have $\int_Mf  d\mu_h\le C(n,h,A,p)$, thus we have
 \begin{align*} 
 \frac{\partial}{\partial t}\int_M f d\mu_h +\int_M |\tilde\nabla^2 g|^2d\mu_h \leq  C(n,h,A,p).
\end{align*}
Integrate it, we get
\begin{align*}
\int_M f(T_0)d\mu_h-\int_M f(0)d\mu_h+\int_0^{T_0}\int_M |\tilde\nabla^2 g|^2d\mu_h dt \leq  C(n,h,A,p).
\end{align*}
Since $\int_M f(T_0)d\mu_h\ge 0$ and $\int_M f(0)d\mu_h \le C(n,h,A,p)$, we get 
\begin{equation*}
\int_0^{T_0}\int_M |\tilde\nabla^2 g|^2d\mu_h dt \leq  C(n,h,A,p),
\end{equation*}
which completes the proof of the lemma.
\end{proof}
Now Theorem \ref{mainsec6} would follow without much effort.
\begin{proof}[Proof of Theorem \ref{mainsec6}]
Let $g(t)$ be the $h$-flow stated in Theorem \ref{thm2.2}. Then there is a family of diffeomorphisms $\phi(t):M \rightarrow M$, such that $\phi(t)^* g(t),t\in(0,T_0]$ is a Ricci flow. (See \cite{Si02}).

Since $M$ is compact, by Theorem \ref{thm2.2} (1) we have that $(M,g(t))$ converges to $(M,\hat g)$ in Gromov-Hausdorff distance. Since the Ricci flow $\phi(t)^* g(t)$ and the $h$-flow $g(t)$ are isometric for each $t\in (0,T_0]$, Theorem \ref{mainsec6} (1) holds.

To prove (2) and (3),
note that ${\rm{Rm}=\partial^2 g+\partial g*\partial g}$ and $p>n\ge 2$. Since the Ricci flow $\phi(t)^* g(t)$ and the $h$-flow $g(t)$ are  isometric for each $t\in(0,T_0]$ and both $g(t)$ and $\phi(t)^* g(t)$ are uniformly equivalent to $h$, Theorem \ref{mainsec6} (2) and (3) follows immediately from Theorem \ref{thm6.2} and Lemma \ref{lmR}.
\end{proof}

\section{Conjugate heat equation}
Let $(M^n,\hat g)$ be a compact manifold let $g(t)$ be the Ricci flow starting from the metric $\hat g$. In this section we suppose that $\hat g$ is smooth. Let $\mu_{g(t)}$ be the Lebesgue measure taken with respect to $g(t)$, and let $R_{\hat g} $ and $R_{g(t)} $ denote the distirbutional scalar curvature of $\hat g$ and $g(t)$ respectively. Let $\tilde \varphi$ be an arbitrary nonnegative function in $C^\infty(M)$ and take any $T\in (0,T_0]$. We consider the following conjugate heat equation
\begin{align}\label{che}\left\{
\begin{array}{rl}
&\partial_t\varphi_t=-\Delta_{g(t)}\varphi_t+R_{g(t)} \varphi_t \quad on\ M\times[0,T],\\
&\left.\varphi_t\right|_{t=T}=\tilde\varphi,
\end{array} \right.
\end{align}
where $\Delta_{g(t)}$ is the Laplacian taken with respect to $g(t)$.

For fixed $(x,t)\in M\times (0,T_0]$, the conjugate heat kernel on the Ricci flow background is the function $K(x,t;\cdot,\cdot)$, defined for $0\le s<t$ and $y\in M$ and satisfying
\begin{align*}
(-\partial_s-\Delta_y +R(y,s))K(x,t;y,s)=0\qquad and \qquad \lim_{s\to t^-}K(x,t;y,s)=\delta_x(y),
\end{align*}
where the Laplacian is taken with respect to $g(s)$, and $\delta_x$ is the Dirac operator supported on $\{x\}$. $K$ also satisfies $(\partial_t-\Delta_x )K(x,t;y,s)=0$, where $\Delta_x$ is taken with respect to $g(t)$.

By directly calculation(see also \cite{BaZh}), we get that equation (\ref{che}) has a solution with the explicity expression
\begin{equation}\label{expphi}
\varphi_t(x)=\int_M K(y,T;x,t)\tilde \varphi(y)d\mu_{g(T)}(y).
\end{equation}
By the maximum principle, we get that this solution is nonnegative and unique. Furthermore, by this expression we can see that $\varphi_t$ is uniformly bounded. Our main purpose in this section is proving the following estimates for $\varphi_t$ which will be used in the proof of our main theorem.
\begin{proposition}\label{prop_varphit}
	Assume as above, then $\varphi_t$ satisfies
\begin{itemize}
	\item[(1)] $\varphi_t\le C(n,h,A,p,\|\tilde\varphi\|_{L^\infty})$, $\forall t\in[0,T].$
	\item[(2)] $\int_M | \nabla_{g(t)} \varphi_t|^2_{g(t)}d\mu_{g(t)}\le C(n,h,A,p,\tilde\varphi)$,  $\forall t\in[0,T].$
	\item[(3)] $\int_M (R_{g(t)} -a) \varphi_td\mu_{g(t)}$ is monotonously increasing with respect to $t$. 
\end{itemize}
\end{proposition}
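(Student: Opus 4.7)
The plan is to handle the three parts separately, with Parts (1) and (3) being short arguments and Part (2) the main technical step. Throughout, smoothness of $\hat g$ (assumed in this section) guarantees that $g(t)$ and $\varphi_t$ are smooth on $M\times[0,T]$, so all manipulations are classical; the goal is to exhibit constants depending only on $(n,h,A,p,\tilde\varphi)$.

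For the $L^\infty$ bound in (1), I would time-reverse the equation by setting $\tau = T-t$ and $\psi(x,\tau) := \varphi_{T-\tau}(x)$, giving $\partial_\tau\psi = \Delta_{g(T-\tau)}\psi - R_{g(T-\tau)}\psi$ with $\psi(\cdot,0) = \tilde\varphi$. At an interior spatial maximum $\Delta\psi\le 0$, so $\frac{d}{d\tau}\max_M\psi \le \|R_{g(T-\tau)}\|_\infty \max_M\psi$. The crucial input is Theorem \ref{mainsec6}(2), which gives $\|R_{g(t)}\|_\infty \le C\,t^{-(n/(4p)+3/4)}$ with exponent strictly less than $1$ (since $p>n$), so $\|R\|_\infty \in L^1(0,T)$ and Gronwall yields the bound.

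For the monotonicity in (3), the calculation is short. Using $\partial_t R = \Delta R + 2|\Ric|^2$ from the Ricci flow, $\partial_t\varphi_t = -\Delta\varphi_t + R\varphi_t$, and $\partial_t d\mu_{g(t)} = -R\,d\mu_{g(t)}$, one computes
\[\frac{d}{dt}\int_M R\varphi\,d\mu = \int(\Delta R)\varphi\,d\mu + 2\int|\Ric|^2\varphi\,d\mu + \int R(-\Delta\varphi + R\varphi)\,d\mu - \int R^2\varphi\,d\mu = 2\int|\Ric|^2\varphi\,d\mu \ge 0,\]
after integrating by parts to replace $\int(\Delta R)\varphi$ by $\int R\Delta\varphi$ and cancelling the $R^2\varphi$ terms. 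The same equations give $\frac{d}{dt}\int\varphi\,d\mu = -\int\Delta\varphi\,d\mu = 0$, so $\int\varphi_t\,d\mu_{g(t)}$ is conserved, $-a\int\varphi\,d\mu$ is constant in $t$, and monotonicity of $\int(R-a)\varphi\,d\mu$ follows at once since $\varphi\ge 0$.

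The gradient bound (2) is where the main difficulty lies. The natural first step is to differentiate $F(t) := \int\varphi_t^2\,d\mu_{g(t)}$, which yields the energy identity $F'(t) = 2\int|\nabla\varphi|^2\,d\mu + \int R\varphi^2\,d\mu$; integrating from $t$ to $T$ and using Part (1) together with $\|R\|_\infty \in L^1(0,T)$ produces only the time-integrated bound $\int_0^T \int|\nabla\varphi|^2\,d\mu\,ds \le C$. To upgrade to the pointwise-in-$t$ bound required, one must differentiate $E(t) := \int|\nabla\varphi|^2\,d\mu_{g(t)}$ using $\partial_t g^{ij} = 2R^{ij}$ and the equation for $\varphi$, and crucially integrate by parts the cross-term $-2\int R\varphi\Delta\varphi = 2\int\varphi\nabla R\cdot\nabla\varphi + 2\int R|\nabla\varphi|^2$ rather than bounding it by Cauchy--Schwarz with $R^2\varphi^2$ (which would produce a non-integrable $\|R\|_\infty^2$). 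The resulting inequality takes the schematic form $|E'(t)| \le A(t)E(t) + B(t)$ with $A,B \in L^1(0,T)$, and Gronwall run backward from the smooth terminal data $\tilde\varphi$ gives the pointwise bound. The main technical challenge is controlling the $\int|\nabla R|^2\,d\mu$ contribution to $B$ using the higher-derivative estimates from Theorem \ref{mainsec6}(2)--(3) and Lemma \ref{lmR}, while keeping the constant dependent only on $(n,h,A,p,\tilde\varphi)$ and independent of any further regularity of $\hat g$.
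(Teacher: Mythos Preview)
Your arguments for (1) and (3) are correct and close to the paper's; for (1) the paper uses the heat-kernel representation and differentiates $\int_M K(y,T;x,t)\,d\mu_{g(T)}(y)$ in $T$ rather than your maximum-principle Gronwall, but both rest on the same integrability of $\|R\|_\infty$ from Theorem~\ref{mainsec6}(2).

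For (2) your overall plan (differentiate $E(t)$, obtain a differential inequality, run Gronwall backward from $t=T$) matches the paper, but you take an unnecessary detour because of a misconception. You reject bounding the cross-term $-2\int R\varphi\Delta\varphi$ by Cauchy--Schwarz with $R^2\varphi^2$ on the grounds that it would produce a non-integrable $\|R\|_\infty^2$. But it does not: after absorbing $|\Delta\varphi|^2\le C(n)|\nabla^2\varphi|^2$ into the good Hessian term coming from Bochner, one is left with $-C(n)\int_M R^2\varphi^2\,d\mu$, and using (1) this is bounded by $-C\int_M R^2\,d\mu$. The point you are missing is that Theorem~\ref{mainsec6}(3) (equivalently Lemma~\ref{lmR}) gives $\int_0^{T_0}\int_M |\Rm|^2\,d\mu\,dt\le C$, so $t\mapsto\int_M R^2\,d\mu$ is in $L^1(0,T)$ even though $\|R\|_\infty^2$ need not be. This is exactly what the paper does: it obtains
\[
\partial_t E(t)\ge -\frac{C}{t^\alpha}E(t)-C\int_M R_{g(t)}^2\,d\mu_{g(t)},\qquad \alpha\in(0,1),
\]
and both right-hand terms are integrable on $(0,T)$.

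Your alternative route via integration by parts to $\int\varphi\,\nabla R\cdot\nabla\varphi$ then requires $\int_M|\nabla R|^2\,d\mu\in L^1_t$, which is not among the stated estimates (Theorem~\ref{mainsec6} controls $|\Rm|$ pointwise and $|\Rm|^2$ in spacetime $L^1$, but not $|\nabla\Rm|$), and you yourself flag this as unresolved. So the gap in your proposal is precisely this step; the fix is to drop the detour and use the spacetime $L^2$ curvature bound directly on the $R^2\varphi^2$ term.
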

\begin{proof}
	To see (1), by equation (\ref{expphi}), we have
\begin{align}\label{phi64}
\varphi_t(x)\leq \|\tilde\varphi\|_{L^\infty}\int_M K(y,T;x,t)d\mu_{g(T)} (y).
\end{align}
We denote $F(t,T)=\int_M  K(y,T;x,t)d\mu_{g(T)} (y)$, then we have $\lim_{T\to t^+}F(t,T)=1$, and
\begin{align*}
\partial_TF(t,T)=\int_M\left(\Delta_y K(y,T;x,t)-R_T K(y,T;x,t)\right)d\mu_{g(T)} (y),
\end{align*}
where we have used the standard evolution equation $\partial_T d\mu_{g(T)} =-R_Td\mu_{g(T)} $. By divergence theorem, we have 
\begin{align*}
\int_M \Delta_y K(y,T;x,t) d\mu_{g(T)} (y)=0.
\end{align*}
Thus by Theorem \ref{mainsec6}, we have 
\begin{align*}
\partial_T F(t,T)\le \frac{C(n,h,A,p)}{T^\alpha}F(t,T),
\end{align*}
for some $\alpha\in(0,1)$.
Since $\lim_{T\to t^+}F(t,T)=1$, by integration we have
\begin{align}\label{phi65}
F(t,T)\le C(n,h,A,p),\forall 0\le t<T\le T_0.
\end{align}
By (\ref{phi64}) and (\ref{phi65}), we have
\begin{align*}
\varphi_t\le C(n,h,A,p,\|\tilde\varphi\|_{L^\infty}),\forall t\in[0,T],
\end{align*}
which proves (1).

To prove (2), denote $E(t)=\int_M | \nabla_{g(t)} \varphi_t|^2_{g(t)}d\mu_{g(t)} $. By direct calculation, we have
\begin{align}\label{a6.6}
\partial_t E(t)=\int_M \left(-R_{g(t)} | d \varphi_t|_{g(t)}^2+2\Ric_{g(t)}( \nabla_{g(t)} \varphi_t,  \nabla_{g(t)} \varphi_t)+2 \langle   \nabla_{g(t)} \partial_t \varphi_t,  \nabla_{g(t)} \varphi_t\rangle_{g(t)}\right)d\mu_{g(t)}.
\end{align}
By (\ref{che}) and using Bochner formula we have
\begin{align}\label{a67}
\int_M \langle   \nabla_{g(t)} \partial_t \varphi_t,  \nabla_{g(t)} \varphi_t\rangle_{g(t)} d\mu_{g(t)}&=\int_M -\langle   \nabla_{g(t)} (\Delta_{g(t)} \varphi_t-R_{g(t)}  \varphi_t),  \nabla_{g(t)} \varphi_t\rangle_{g(t)} d\mu_{g(t)} \notag\\ 
&=\int_M \left(-\langle \nabla_{g(t)} \Delta_{g(t)} \varphi_t ,\nabla_{g(t)} \varphi_t\rangle_{g(t)} +\langle\nabla_{g(t)}(R_{g(t)} \varphi_t),\nabla_{g(t)}\varphi_t\rangle_{g(t)}\right)d\mu_{g(t)}\notag \\ 
&=\int_M \left(-\frac{1}{2}\Delta_{g(t)}|\nabla_{g(t)}\varphi_t|_{g(t)}^2+|\nabla_{g(t)}^2\varphi_t|_{g(t)}^2+\Ric_{g(t)}(\nabla_{g(t)}\varphi_t,\nabla_{g(t)}\varphi_t)\right.\notag \\ 
&\left.\qquad\qquad+\langle\nabla_{g(t)}(R_{g(t)} \varphi_t),\nabla_{g(t)}\varphi_t\rangle_{g(t)}\right)d\mu_{g(t)}\notag \\ 
&=\int_M \left[\left(|\nabla_{g(t)}^2\varphi_t|_{g(t)}^2+\Ric_{g(t)}(\nabla_{g(t)}\varphi_t,\nabla_{g(t)}\varphi_t)\right)-R_{g(t)} \varphi_t\Delta_{g(t)}\varphi_t\right]d\mu_{g(t)}.
\end{align}
Since $|\Delta_{g(t)}\varphi_t|_{g(t)}^2\le C(n)|\nabla_{g(t)}^2\varphi_t|^2$, using Cauchy inequality, (\ref{a67}) gives
\begin{align}\label{a6.8}
 \int_M \langle   \nabla_{g(t)} \partial_t \varphi_t,  \nabla_{g(t)} \varphi_t\rangle_{g(t)} d\mu_{g(t)}\ge \int_M \left(\Ric_{g(t)}(\nabla_{g(t)}\varphi_t,\nabla_{g(t)}\varphi_t)-C(n)R_{g(t)} ^2\varphi_t^2\right)d\mu_{g(t)}.
\end{align}
By (\ref{a6.6}) and (\ref{a6.8}), since $|R_{g(t)} |_{g(t)}\le c(n)|\Ric_{g(t)}|_{g(t)}$, we get
\begin{align}
\partial_tE(t)&\ge \int_M \left(4\Ric_{g(t)}(\nabla_{g(t)}\varphi_t,\nabla_{g(t)}\varphi_t)-R_{g(t)}|\nabla_{g(t)} \varphi_t|_{g(t)}^2 -C(n)R_{g(t)}^2\varphi_t^2\right)d\mu_{g(t)}\notag \\ 
&\ge -C(n,h,A,p)\int_M |\Ric_{g(t)}||\nabla_{g(t)}\varphi_t|^2d\mu_{g(t)}-C(n,h,A,p)\int_M R_{g(t)} ^2\varphi_t^2d\mu_{g(t)}.
\end{align}
By Theorem \ref{thm66.2} and (1), we have for some $\alpha\in(0,1)$ that
\begin{align*}
\partial_t E(t)\ge -\frac{C(n,h,A,p)}{t^\alpha}\int_M |\nabla_{g(t)} \varphi_t|^2d\mu_{g(t)}-C(n,h,A,p,\tilde\varphi)\int_M R_{g(t)} ^2d\mu_{g(t)}.
\end{align*}
 Thus we have
\begin{align*}
\partial_t\left(E(t)+1\right)\ge -\frac{C(n,h,A,p)}{t^\alpha}\left(E(t)+1\right)-C(n,h,A,p,\tilde\varphi)\int_M R_{g(t)} ^2d\mu_{g(t)}.
\end{align*}
Dividing both sides by $\left(E(t)+1\right)$, we get
\begin{align*}
\partial_t \log\left(E(t)+1\right)\ge -\frac{C(n,h,A,p)}{t^\alpha}-C(n,h,A,p,\tilde\varphi)\int_M R_{g(t)} ^2d\mu_{g(t)}.
\end{align*}
 By Lemma \ref{lmR}, $\int_M R_{g(t)} ^2d\mu_{g(t)}$ is integrable on $(0,T)$. Since $\varphi_T=\tilde\varphi$, $E(T)\le C(n,h,A,p,\tilde\varphi)$ and $\frac{1}{t^\alpha}$ is integrable on $(0,T)$, we can integrate the inequality above and get
\[E(t)\le C(n,h,A,p,\tilde\varphi), \forall t\in[0,T].
\]
Thus we complete the proof of (2).

To prove (3), we can directly calculate that
\begin{align*}
&\partial_t \int_M (R_{g(t)} -a)\varphi_t d\mu_{g(t)}\\
=& \int_M \left[\left(\Delta_{g(t)}  R_{g(t)} +2|\Ric_{g(t)} |_{g(t)}^2\right)\varphi_t + (R_{g(t)} -a)\left(-\Delta_{g(t)} \varphi_t+ R_{g(t)} \varphi_t\right) + (R_{g(t)} -a)\varphi_t\left(-R_{g(t)} \right)\right]d\mu_{g(t)}\\
=& \int_M 2|\Ric_{g(t)} |_{g(t)}^2\varphi_t d\mu_{g(t)} + \int_M \left(\Delta_{g(t)}  R_{g(t)}  \varphi_t - R_{g(t)}  \Delta_{g(t)}  \varphi_t \right)d\mu_{g(t)}+a\int_M\Delta_{g(t)} \varphi_td\mu_{g(t)}.
\end{align*}
By integration by parts, we have that 
\begin{equation*}
\int_M \left(\Delta_{g(t)}  R_{g(t)}  \varphi_t - R_{g(t)}  \Delta_{g(t)}  \varphi_t \right)d\mu_{g(t)}=0,\int_M\Delta_{g(t)} \varphi_td\mu_{g(t)}=0.
\end{equation*}
Thus we have 
\begin{equation*}
\partial_t \int_M (R_{g(t)} -a)\varphi_t d\mu_{g(t)} \ge 0.
\end{equation*}
Therefore $\int_M (R_{g(t)} -a) \varphi_td\mu_{g(t)}$ is monotonously increasing. Hence we finish the proof of (3) and thus the proof of the proposition.
\end{proof}

\section{Proof of the main theorem}
In this section, we will give the proof of our main theorem. Let us restate Theorem \ref{thm1.1} as follows. 
\begin{theorem}\label{mthm2}
Let $M^n$ be a compact manifold with a metric $\hat g\in W^{1,p}(M)$ $(n< p\le \infty)$. Assume the distirbutional scalar curvature $R_{\hat g}\ge a$ for some constant $a$, and let $g(t),t\in(0,T_0]$ be the Ricci flow starting from the metric $\hat g$. Then for any $t\in(0,T_0]$, there holds $R_{g(t)}\ge a$ on $M$. 
\end{theorem}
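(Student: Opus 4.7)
The plan is to compare $R_{g(T)}$ against $R_{\hat g}$ via the conjugate heat equation, and to transfer the distributional lower bound at $t=0$ to a classical pointwise lower bound at $t=T$ by a smooth approximation argument. Fix an arbitrary $\tilde\varphi\in C^\infty(M)$ with $\tilde\varphi\ge 0$ and an arbitrary $T\in(0,T_0]$; it suffices to show $\int_M(R_{g(T)}-a)\tilde\varphi\, d\mu_{g(T)}\ge 0$, since $R_{g(T)}$ is smooth. Using Lemma \ref{lm2.1} I approximate $\hat g$ by smooth metrics $\hat g_\delta$ converging to $\hat g$ in $C^0\cap W^{1,p}$. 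I may assume, using lower semicontinuity of the $W^{1,p}$-norm, that $\int_M|\tilde\nabla\hat g_\delta|^p\,d\mu_h\le A$ for a constant $A$ independent of $\delta$ and, possibly by shrinking $\delta$, that $h$ is $(1+\epsilon(n)/2)$-fair to all $\hat g_\delta$. Let $g_\delta(t)$ be the Ricci flow starting at $\hat g_\delta$; by Remark \ref{limitf} the solutions $g_\delta(t)$ converge smoothly to $g(t)$ on compact subsets of $M\times(0,T_0]$, and the estimates in Theorem \ref{mainsec6} hold uniformly in $\delta$ on a common interval $(0,T_0]$.

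For each $\delta$ let $\varphi_{\delta,t}$ be the solution of the conjugate heat equation on $(M,g_\delta(t))$ with terminal value $\varphi_{\delta,T}=\tilde\varphi$. By the smooth case of Proposition \ref{prop_varphit}(3), the quantity $t\mapsto\int_M(R_{g_\delta(t)}-a)\varphi_{\delta,t}\,d\mu_{g_\delta(t)}$ is non-decreasing, hence
\begin{equation*}
\int_M(R_{g_\delta(T)}-a)\tilde\varphi\,d\mu_{g_\delta(T)}\;\ge\;\int_M(R_{\hat g_\delta}-a)\varphi_{\delta,0}\,d\mu_{\hat g_\delta}
\;=\;\langle R_{\hat g_\delta},\varphi_{\delta,0}\rangle-a\int_M\varphi_{\delta,0}\,d\mu_{\hat g_\delta}.
\end{equation*}
The estimates of Proposition \ref{prop_varphit}(1),(2), applied to $g_\delta$, provide a bound $\|\varphi_{\delta,0}\|_{L^\infty}+\|\nabla\varphi_{\delta,0}\|_{L^2}\le K$ with $K=K(n,h,A,p,\tilde\varphi)$ independent of $\delta$; since $M$ is compact, this upgrades via Hölder to a uniform bound $\|\varphi_{\delta,0}\|_{W^{1,n/(n-1)}}\le K'$.

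Now I transfer the information from $\hat g_\delta$ to $\hat g$. By Lemma \ref{lm2.2}, for any $\varepsilon>0$ and all sufficiently small $\delta$,
\begin{equation*}
\langle R_{\hat g_\delta},\varphi_{\delta,0}\rangle\;\ge\;\langle R_{\hat g},\varphi_{\delta,0}\rangle-\varepsilon\,\|\varphi_{\delta,0}\|_{W^{1,n/(n-1)}}\;\ge\;\langle R_{\hat g},\varphi_{\delta,0}\rangle-\varepsilon K'.
\end{equation*}
The distributional hypothesis $R_{\hat g}\ge a$ applied to the nonnegative test function $\varphi_{\delta,0}$ yields $\langle R_{\hat g},\varphi_{\delta,0}\rangle\ge a\int_M\varphi_{\delta,0}\,d\mu_{\hat g}$. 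The $C^0$-convergence $\hat g_\delta\to\hat g$ implies $d\mu_{\hat g_\delta}/d\mu_{\hat g}\to 1$ uniformly, so the $L^1$-bounded family $\varphi_{\delta,0}$ satisfies $\int_M\varphi_{\delta,0}\,d\mu_{\hat g}-\int_M\varphi_{\delta,0}\,d\mu_{\hat g_\delta}=o_\delta(1)$. Combining these inequalities gives
\begin{equation*}
\int_M(R_{g_\delta(T)}-a)\tilde\varphi\,d\mu_{g_\delta(T)}\;\ge\;-\varepsilon K'-|a|\cdot o_\delta(1).
\end{equation*}

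Finally I pass to the limit. The smooth convergence $g_\delta(T)\to g(T)$ on $M$ from Remark \ref{limitf} yields $R_{g_\delta(T)}\to R_{g(T)}$ uniformly and $d\mu_{g_\delta(T)}\to d\mu_{g(T)}$, so the left-hand side above converges to $\int_M(R_{g(T)}-a)\tilde\varphi\,d\mu_{g(T)}$. Letting $\delta\to 0$ and then $\varepsilon\to 0$, I obtain $\int_M(R_{g(T)}-a)\tilde\varphi\,d\mu_{g(T)}\ge 0$ for every nonnegative $\tilde\varphi\in C^\infty(M)$, hence $R_{g(T)}\ge a$ pointwise on $M$.

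The step I expect to be the main obstacle is securing \emph{uniformity in} $\delta$ of all the estimates invoked above. The constants in Theorem \ref{mainsec6} and Proposition \ref{prop_varphit} are written as $C(n,h,A,p,\dots)$, so this uniformity is built in once the $W^{1,p}$-bound $A$ is chosen independently of $\delta$; the only subtle point is that Proposition \ref{prop_varphit}(2) needs $E(T)\le C(n,h,A,p,\tilde\varphi)$ at the terminal time, which is automatic since $\varphi_{\delta,T}=\tilde\varphi$ is fixed, and that the $R_{g_\delta(t)}^2$-integrability used there comes from the uniform $L^2$ curvature bound of Theorem \ref{mainsec6}(3). The error term controlled by Lemma \ref{lm2.2} is absorbed precisely because the $W^{1,n/(n-1)}$-norm of $\varphi_{\delta,0}$ is uniformly bounded, which is the reason the exponent $n/(n-1)$ appearing in Lemma \ref{lm2.2} is sharp for this argument.
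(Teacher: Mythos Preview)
Your proof is correct and follows essentially the same route as the paper's: approximate $\hat g$ by smooth $\hat g_\delta$ via Lemma~\ref{lm2.1}, use the monotonicity of $\int_M(R_{g_\delta(t)}-a)\varphi_{\delta,t}\,d\mu_{g_\delta(t)}$ from Proposition~\ref{prop_varphit}(3), control the right-hand side at $t=0$ via Lemma~\ref{lm2.2} and the uniform $W^{1,n/(n-1)}$ bound on $\varphi_{\delta,0}$ coming from Proposition~\ref{prop_varphit}(1),(2), and pass to the limit using the smooth convergence of Remark~\ref{limitf}. The paper packages the Lemma~\ref{lm2.2} error and the volume-form error into a single $b_\delta\to 0$ rather than your $\varepsilon$--$\delta$ two-step, but this is purely cosmetic.
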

\begin{proof}
By Lemma \ref{lm2.1}, we get a family of smooth metrics $\hat g_\delta$ which converges to $\hat g$ both in $C^0$-norm and $W^{1,p}$-norm. Then by Lemma \ref{lm2.2}, we have
\begin{align}\label{d1}
\left|\int_M R_{\hat g_\delta}\varphi d\mu_{\hat g_\delta}-\langle R_{\hat g},\varphi\rangle\right|\le b'_\delta \|\varphi\|_{W^{1,\frac{n}{n-1}}(M)}, \forall \varphi\in C^\infty(M),\forall\delta\in(0,\delta_0],
\end{align}
where $b'_\delta$ is a positive function of $\delta$ which only depends on $\hat g$ and satisfies $\lim_{\delta\to0^+}b'_\delta=0$ and $\delta_0$ is some positive constant small enough.

Moreover, we have $\lim_{\delta\to0^+}\left\|\frac{d\mu_{\hat g_\delta}}{d\mu_{\hat g}}-1\right\|_{C^0(M)}=0$, thus by H\"older inequality, we have
\begin{align}\label{d2}
\left|\int_M\varphi d\mu_{\hat g_\delta}-\int_M\varphi d\mu_{\hat g}\right|
&= \left|\int_M\varphi \left(\frac{d\mu_{\hat g_\delta}}{d\mu_{\hat g}}-1 \right)d\mu_{\hat g}\right|\notag\\
&\le \left\|\frac{d\mu_{\hat g_\delta}}{d\mu_{\hat g}}-1\right\|_{C^0(M)}\int_M|\varphi| d\mu_{\hat g}
\notag\\
&\le  C(n,\hat g)\left\|\frac{d\mu_{\hat g_\delta}}{d\mu_{\hat g}}-1\right\|_{C^0(M)}\|\varphi\|_{W^{1,\frac{n}{n-1}}(M)}.
\end{align}
By triangular inequality, combining (\ref{d1}) and (\ref{d2}), we have
\begin{align}\label{d3}
\left|\int_M (R_{\hat g_\delta}-a)\varphi d\mu_{\hat g_\delta}-\langle R_{\hat g}-a,\varphi\rangle\right|
&\le \left|\int_M R_{\hat g_\delta}\varphi d\mu_{\hat g_\delta}-\langle R_{\hat g},\varphi\rangle\right|+|a|\left|\int_M\varphi d\mu_{\hat g_\delta}-\int_M\varphi d\mu_{\hat g}\right|\notag\\
&\le b_\delta \|\varphi\|_{W^{1,\frac{n}{n-1}}(M)}, \forall \varphi\in C^\infty(M),\forall\delta\in(0,\delta_0],
\end{align}
where $b_\delta$ is a positive function of $\delta$ which only depends on $a$, $n$, $\hat g$ and satisfies $\lim_{\delta\to0^+}b_\delta=0$.

By the condition on $R_{\hat g}$ we have 
\begin{equation}\label{d}
\langle R_{\hat g}-a,\varphi\rangle \ge 0, \forall \varphi\in C^\infty(M),\varphi\ge0.
\end{equation}

Let $g_{\delta }(t)$, $g(t)$ be the Ricci flow starting from the metric $\hat g_\delta$ and $\hat g$ respectively, and let $R_{g_{\delta}(t)}$, $R_{g(t)} $ and $d\mu_{g_{\delta}(T)}$, $d\mu_{g(t)}$ be the scalar curvature and the volume form of $g_{\delta}(t)$, $g(t)$ respectively. Take an arbitrary $T\in (0,T_0]$ and an arbitrary nonnegative function $\tilde \varphi\in C^\infty(M)$. For any $\delta\in(0,\delta_0]$, we let $\varphi_t$ be the solution of the conjugate heat equation taken with repsect to the family of metrics $g_\delta (t)$ and with $\varphi_t|_{t=T}=\tilde \varphi$, then $\varphi_0\ge0$, and by Proposition \ref{prop_varphit} (3), (\ref{d3}) and (\ref{d}), we have
\begin{align}\label{aaa}
\int_M (R_{g_{\delta}(T)}-a)\tilde\varphi d\mu_{g_{\delta}(T)}\ge \int_M (R_{\hat g_\delta}-a)\varphi_0 d\mu_{\hat g_\delta}\ge -b_\delta \|\varphi_0\|_{W^{1,\frac{n}{n-1}}(M)}.
\end{align}
By H\"older inequality and Proposition \ref{prop_varphit} (1) and (2), since $g_\delta(t),(\delta,t)\in (0,\delta_0]\times [0,T_0]$ and $h$ are uniformly equivalent, by H\"older inequality we have
\[\|\varphi_0\|_{W^{1,\frac{n}{n-1}}(M)}\le C(n,h,p)\|\varphi_0\|_{L^\infty(M)}+\|\nabla_{\hat g_\delta}\varphi_0 \|_{L^2(M)} \le C(n,h,A,p,\tilde\varphi),\]
and as mentioned in Remark \ref{limitf}, it is known in \cite{Si02} that for any fixed $T\in (0,T_0]$, $g_{\delta}(T)$ smoothly converges to $g(T)$ as $\delta$ tends to $0^+$.

Thus we can let $\delta\to0^+$ in inequality (\ref{aaa}), and get 
\begin{align*}
\int_M (R_{g(T)}-a)\tilde\varphi d\mu_{g(T)} \ge 0,\forall T\in(0,T_0],\forall \tilde\varphi \in C^\infty(M).
\end{align*}
Since $g(t)$ is a smooth metric for $t\in(0, T_0]$, we get that $R_{g(t)} \ge a$ pointwisely on $M$, which completes the proof of the theorem.
\end{proof}

Now we are ready to prove Theorem \ref{thmY2}.
\begin{proof}[Proof of Theorem \ref{thmY2}]
Recall that $R_{g(t)} $ satisfies the standard evolution equation
\begin{align}\label{Rt}
\partial_t R_{g(t)} =\Delta R_{g(t)}  +2|\Ric_{g(t)}|^2.
\end{align}
Noting that $R_g\ge 0$ in distributional sense, by Theorem \ref{thm1.1} we see that $R_{g(t)}\ge 0$ for any $t\in (0,T_0)$. There are two cases needed to be handled. 
First, if $R_{g(t)} \equiv 0$ on $M\times(0,T_0]$, then (\ref{Rt}) shows that $\Ric_{g(t)}\equiv 0$ on $M\times(0,T_0]$. Then by Ricci flow equation we have that $g(t)=g(s)$, $\forall t,s\in(0,T_0]$. By (1) of Theorem \ref{thm66.2} we directly have that $ (M,\hat g)$ is isometric to $(M,g(t))$. 
On the other hand, if $R_{g(t)} \not\equiv 0$ on $M\times(0,T_0]$, then by the maximal principle we get that $R_{g(t)} >0$ on $M$ for any $t\in (0,T_0)$. Since $g(t)$ is smooth and $R_{g(t)}>0$ pointwisely on $M$, the theorem holds. 
\end{proof}

\section{Further questions}
In this section, let us discuss some problems related to singular metrics. To study the scalar curvature of low-regularity metrics, Schoen proposed such a conjecture (see \cite{LiMa}):
\begin{conjecture}[Schoen]
Let $g$ be a $C^0$ metric on $M$ which is smooth away from a submanifold $\Sigma\subset M$ with codim$(\Sigma\subset M)\ge 3$, $\sigma(M)\le0$ and $R_g\ge0$ on $M\setminus \Sigma$, then $g$ smoothly extends to $M$ and $\Ric_g\equiv 0$.
\end{conjecture}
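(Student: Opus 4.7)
The plan is to reduce Schoen's conjecture to an application of Theorem~\ref{thmY2}. For that theorem to apply, I need two ingredients: (i) $g\in W^{1,p}(M)$ for some $p>n$, and (ii) the distributional scalar curvature $R_g\ge 0$ on all of $M$.

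Granting (i), step (ii) is a direct consequence of Remark~\ref{rmk1.1}. Indeed, $\mathrm{codim}(\Sigma\subset M)\ge 3$ gives $\dim_H(\Sigma)\le n-3$, hence $\cH^{n-2}(\Sigma)=0$. For $p>n$ chosen sufficiently close to $n$, one has $n-p/(p-1)>n-n/(n-1)=n(n-2)/(n-1)>n-2$ (for $n\ge 3$), so $\cH^{n-p/(p-1)}(\Sigma)=0$ follows automatically. Since $g$ is smooth with $R_g\ge 0$ pointwise on $M\setminus\Sigma$, Remark~\ref{rmk1.1} then gives $R_g\ge 0$ distributionally on $M$.

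With (i) and (ii) in hand, Theorem~\ref{thmY2} yields the dichotomy: either $(M,g)$ is isometric to a Ricci-flat manifold, or $M$ admits a smooth metric of positive scalar curvature. The hypothesis $\sigma(M)\le 0$ rules out the second alternative, since by the definition of the Yamabe invariant any smooth PSC representative would force $\sigma(M)>0$. Hence $(M,g)\cong(N,\tilde g)$ for some smooth Ricci-flat manifold $(N,\tilde g)$. Transporting the smooth structure back through the isometry then promotes $g$ to a smooth Ricci-flat metric on all of $M$ (the isometry is already smooth on $M\setminus\Sigma$ where $g$ is smooth, and the codim-$3$ condition together with continuity handles smooth extension across $\Sigma$).

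The main obstacle is step (i): extracting $W^{1,p}$ regularity for some $p>n$ out of a metric that is merely $C^0$ on $M$ and smooth off a codim-$3$ subset. For a generic $C^0$ metric, derivatives need not exist in any integrable sense near $\Sigma$, and this gap is not closed by the paper's tools. A natural strategy is to choose local harmonic coordinates for $g$ on $M\setminus\Sigma$, combine the pointwise bound $C^{-1}h\le g\le Ch$ with $R_g\ge 0$ to get interior $W^{1,p}$ control via elliptic theory, and then use a capacity/cut-off argument based on $\cH^{n-2}(\Sigma)=0$ to glue the local estimates into a global $W^{1,p}$ bound. An alternative avenue is to bypass $W^{1,p}$ entirely by extending Theorem~\ref{thm1.1} to Burkhardt-Guim's $C^0$-weak-scalar-curvature framework~\cite{PBG}, where the Ricci flow itself does the regularization; one would then rerun the proof of Theorem~\ref{thmY2} in that setting. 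Either approach requires genuinely new input beyond what is developed in this paper, which is why the conjecture remains open.
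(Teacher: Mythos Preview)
The statement you are addressing is explicitly labeled a \emph{conjecture} in the paper; it appears in Section~6 (``Further questions'') as an open problem and the paper provides no proof. So there is nothing to compare your attempt against.

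That said, your reduction is sound: if one could establish $g\in W^{1,p}(M)$ for some $p>n$, then your use of Remark~\ref{rmk1.1} and Theorem~\ref{thmY2}, together with $\sigma(M)\le 0$ to kill the PSC alternative, would indeed finish the argument. Your Hausdorff-dimension arithmetic is correct, and you are right that the distributional-scalar-curvature machinery of the paper only kicks in once the $W^{1,p}$ hypothesis is available.

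You have also correctly located the genuine gap: there is no mechanism in this paper (or elsewhere, at present) for upgrading a bare $C^0$ metric, smooth off a codimension-$3$ set, to $W^{1,p}$ with $p>n$. Your two proposed workarounds---interior elliptic estimates plus a capacity argument across $\Sigma$, or a direct extension of Theorem~\ref{thm1.1} to the $C^0$ framework of \cite{PBG}---are reasonable directions, but as you yourself note, both require new ideas. Your final paragraph is the honest bottom line: this is not a proof, it is a correct diagnosis of why the conjecture is open and how the paper's results fall just short of resolving it.
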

Gromov also considered scalar curvature with low-regularity metrics, see \cite{Gm}. For a Ricci flow approach, see also \cite{Bm}, \cite{PBG}. Motivated by their results, we will have the following natural question:

\textbf{Question:} Let $(M^n,g,\Sigma)$ be a compact manifold with $g\in C^0(M)$ and $g$ is smooth away from $\Sigma$, and $R_g\ge 0$ on $M\setminus \Sigma$. What is the condition on $\Sigma$ such that there exists a smooth metric on $M$ with nonnegative scalar curvature? What is the condition on $\Sigma$ such that the Ricci flow starting from $g$ has nonnegative scalar curvature for time $t>0$? 

\begin{remark}
	Comparing with the conjecture of Schoen and the structure of nonnegative scalar curvature in \cite{ScYa2}, one may expect that $\Sigma$ is a codimensional three submanifold for the above question. 
\end{remark}

\begin{remark}
	From our result and the main result in \cite{JSZ}, if we assume $g\in C^0\cap W^{1,p}(M)$ $(n<p\le \infty)$, the condition for $\Sigma$ should be $\cH^{n-\frac{p}{p-1}}(\Sigma)<\infty$ when $n<p<\infty$ or $\cH^{n-1}(\Sigma)=0$ when $p=\infty$.  
\end{remark}

\appendix
\section{Lower scalar curvature bounds on singular metrics}\label{sgl}
In this appendix, we check that if $g$ is $C^2$ away from a closed subset $\Sigma$ with $\cH^{n-\frac{p}{p-1}}(\Sigma)<\infty$ when $n<p<\infty$ or $\cH^{n-1}(\Sigma)=0$ when $p=\infty$, and $R_g\ge a$ pointwisely on $M\setminus\Sigma$, then $R_g\ge  a$ in distributional sense. Though it is proved in \cite{JSZ}, for the sake of completeness, we give a proof here.
\begin{lemma}\label{lmA1}
	Let $ M^n $ be a smooth manifold with $g\in C^0\cap W^{1,p}_{loc}(M)$ with $n\le p\le \infty$. Assume $g$ is smooth away from a closed subset $\Sigma$ with $\cH^{n-\frac{p}{p-1}}(\Sigma)<\infty$ if $n< p<\infty$ or $\cH^{n-1}(\Sigma)=0$ if $p=\infty$, and assume $R_g\ge a$ on $M\setminus \Sigma$ for some constant $a$, then $\langle R_g-a, u\rangle\ge 0$ for any nonnegative $u\in C^\infty(M)$. 
\end{lemma}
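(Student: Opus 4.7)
The plan is to argue by a cut-off-and-approximate scheme near the singular set $\Sigma$. Fix a smooth background metric $h$ with $C^{-1}h\le g\le Ch$, and recall that the distributional pairing is
\begin{align*}
\langle R_g,\varphi\rangle = \int_M \Bigl(-V\cdot\tilde\nabla\bigl(\varphi\tfrac{d\mu_g}{d\mu_h}\bigr)+F\varphi\tfrac{d\mu_g}{d\mu_h}\Bigr)d\mu_h,
\end{align*}
where $V$ is built from one derivative of $g$ and thus lies in $L^p(M,h)$, while $F$ is at worst quadratic in $\tilde\nabla g$ and hence lies in $L^{p/2}(M,h)$. This $L^p$--$L^{p/2}$ integrability will be the workhorse when combined with a good family of cut-offs of $\Sigma$.

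First I would import (from the Lemma \ref{l:cut-off} style statement already used in the paper) a sequence $\eta_\epsilon\in C^\infty(M)$ with $0\le\eta_\epsilon\le 1$, $\eta_\epsilon\equiv 1$ in a neighborhood of $\Sigma$, $\mathrm{supp}\,\eta_\epsilon\subset B_\epsilon(\Sigma)$, and
\begin{align*}
\lim_{\epsilon\to 0}\int_M|\tilde\nabla\eta_\epsilon|^{p/(p-1)}d\mu_h = 0 \quad\text{if } n<p<\infty, \qquad \lim_{\epsilon\to 0}\int_M|\tilde\nabla\eta_\epsilon|\,d\mu_h = 0 \quad\text{if } p=\infty.
\end{align*}
The hypothesis $\mathcal{H}^{n-p/(p-1)}(\Sigma)<\infty$ (resp.\ $\mathcal{H}^{n-1}(\Sigma)=0$) is precisely what produces such cut-offs via a Vitali-type covering of $\Sigma$ by small balls and a standard construction of $\eta_\epsilon$ whose gradient is supported in an annular neighborhood of scale $\epsilon$; the $L^{p/(p-1)}$--norm of $|\tilde\nabla\eta_\epsilon|$ is controlled by $\epsilon^{-p/(p-1)}$ times the volume of an $\epsilon$-tube of $\Sigma$, which in turn is bounded by $\epsilon^{p/(p-1)}\cdot \mathcal{H}^{n-p/(p-1)}(\Sigma)$.

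Given a nonnegative $u\in C^\infty(M)$, I would split $u = (1-\eta_\epsilon)u + \eta_\epsilon u$, which gives
\begin{align*}
\langle R_g-a,u\rangle = \langle R_g-a,(1-\eta_\epsilon)u\rangle + \langle R_g-a,\eta_\epsilon u\rangle.
\end{align*}
Since $(1-\eta_\epsilon)u$ vanishes in a neighborhood of $\Sigma$ and $g$ is $C^2$ on $M\setminus\Sigma$, integration by parts identifies the first pairing with the classical integral $\int_{M\setminus\Sigma}(R_g-a)(1-\eta_\epsilon)u\,d\mu_g$, which is nonnegative by the pointwise hypothesis $R_g\ge a$ off $\Sigma$. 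Thus the lemma reduces to showing $\lim_{\epsilon\to 0}\langle R_g-a,\eta_\epsilon u\rangle = 0$.

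For the remaining pairing I would plug $\varphi = \eta_\epsilon u$ into the definition and estimate by Hölder. The $F$--term is bounded in absolute value by
\begin{align*}
C\|F\|_{L^{p/2}(M,h)}\|u\|_{L^\infty}\bigl(\mathrm{Vol}_h(\mathrm{supp}\,\eta_\epsilon)\bigr)^{(p-2)/p} \longrightarrow 0
\end{align*}
since the support of $\eta_\epsilon$ is an $\epsilon$-tube around $\Sigma$ whose $h$-volume tends to zero. The $V$--term, after expanding $\tilde\nabla(\eta_\epsilon u\tfrac{d\mu_g}{d\mu_h})$, contains a piece with $\tilde\nabla\eta_\epsilon$, controlled by $\|V\|_{L^p}\|\tilde\nabla\eta_\epsilon\|_{L^{p/(p-1)}}\to 0$ by construction, and pieces with $\eta_\epsilon \tilde\nabla u$ or $\eta_\epsilon \tilde\nabla g$, which again go to zero because $\eta_\epsilon$ is supported on a shrinking set while $\tilde\nabla u\in L^\infty$ and $\tilde\nabla g\in L^p$. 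The constant $a$ contributes $a\int \eta_\epsilon u\,d\mu_g$, which vanishes in the limit for the same volume reason. The $p=\infty$ case is handled identically using the corresponding cut-off estimate.

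The main obstacle is the cut-off construction in the correct Sobolev norm on $\Sigma$; once that is in hand, the rest is a routine Hölder estimate using $V\in L^p$, $F\in L^{p/2}$ and the shrinking support of $\eta_\epsilon$. I would relegate the proof of the cut-off lemma itself to a short appendix calculation based on a Vitali covering of $\Sigma$ adapted to the assumed Hausdorff measure bound.
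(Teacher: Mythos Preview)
Your proposal is correct and follows essentially the same approach as the paper's proof: split $u=(1-\eta_\epsilon)u+\eta_\epsilon u$ using the cut-off family from Lemma~\ref{l:cut-off}, use the pointwise bound $R_g\ge a$ on $M\setminus\Sigma$ for the first piece, and kill the second piece by H\"older estimates pairing $V\in L^p$, $F\in L^{p/2}$ against the cut-off. The only cosmetic difference is that where you dispose of the lower-order terms (those carrying $\eta_\epsilon$ rather than $\tilde\nabla\eta_\epsilon$) by invoking $\mathrm{Vol}_h(\mathrm{supp}\,\eta_\epsilon)\to 0$, the paper instead applies a Sobolev inequality to bound $\|\eta_\epsilon\|_{L^{p/(p-2)}}$ by $\|\tilde\nabla\eta_\epsilon\|_{L^{p/(p-1)}}$ and thereby reduces everything to the single quantity $\int_M|\tilde\nabla\eta_\epsilon|^{p/(p-1)}d\mu_h$; both routes are valid.
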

To prove the Lemma \ref{lmA1}, we need a standard cut-off function lemma (see \cite[Lemma A.1]{JSZ} and \cite{Cheeger}):
\begin{lemma}\label{l:cut-off}
	Let $(M^n,h)$ be a smooth manifold. Assume $\Sigma\subset M$ is a closed subset. Then there exists a sequence of cut-off function $\varphi_\epsilon$ of $\Sigma$ such that the following holds:
	\begin{itemize}
		\item[(1)] $0\le \varphi_\epsilon\le 1$ and $\varphi\equiv 0$ in a neighborhood of $\Sigma$ and $\varphi_\epsilon \equiv 1 $ on $M\setminus B_{\epsilon}(\Sigma)$.
		\item[(2)] If $\cH^{n-1}(\Sigma)=0$, then $\lim_{\epsilon\to 0}\int_M|\nabla \varphi_\epsilon|(x)dx=0.$
		\item[(3)] If $\cH^{n-p}(\Sigma)<\infty$ with $p>1$, then  $\lim_{\epsilon\to 0}\int_M|\nabla \varphi_\epsilon|^{p}(x)dx=0.$
	\end{itemize}
\end{lemma}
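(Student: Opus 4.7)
I would build $\varphi_\epsilon$ by a Vitali--Besicovitch covering of $\Sigma$ by small balls whose $(n-s)$-content is controlled by the Hausdorff hypothesis ($s=1$ for (2), $s=p$ for (3)), then glue together standard radial bump functions supported on these balls. Property (1) is then built in, and (2)--(3) will follow by summing $r_i^{n-s}$ over the cover, modulo an extra multiscale refinement in case (3).

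\textbf{Construction and property (1).} Fix $\epsilon>0$ and a smooth background metric $h$. Using the definition of Hausdorff measure and the hypotheses: under (2) cover $\Sigma\subset\bigcup_i B_{r_i}(x_i)$ with $r_i<\epsilon/2$ and $\sum_i r_i^{n-1}<\epsilon$; under (3) cover with $r_i<\epsilon/2$ and $\sum_i r_i^{n-p}\le 2\mathcal{H}^{n-p}(\Sigma)$. Apply the Besicovitch covering theorem to pass to a subcover whose doubled balls $\{B_{2r_i}(x_i)\}$ have overlap multiplicity bounded by $N=N(n)$. Fix a smooth $\chi:\mathbb{R}\to[0,1]$ with $\chi\equiv 1$ on $(-\infty,1]$, $\chi\equiv 0$ on $[2,\infty)$, and $|\chi'|\le 2$, and set $\psi_i(x)=\chi(d_h(x,x_i)/r_i)$, mollified slightly to be $C^\infty$; then $|\tilde\nabla\psi_i|_h\le C(h)/r_i$, $\psi_i\equiv 1$ on $B_{r_i}(x_i)$, and $\mathrm{supp}\,\psi_i\subset B_{2r_i}(x_i)$. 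Define $\varphi_\epsilon=1-\tilde\chi\bigl(\sum_i\psi_i\bigr)$ with $\tilde\chi$ a smooth approximation of $\min(1,\cdot)$. Property (1) is then immediate: $\varphi_\epsilon\equiv 0$ on $\bigcup_i B_{r_i}(x_i)\supset\Sigma$, and $\varphi_\epsilon\equiv 1$ outside $\bigcup_i B_{2r_i}(x_i)\subset B_\epsilon(\Sigma)$ (up to a harmless factor of $2$).

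\textbf{Gradient estimates.} Bounded overlap gives $|\tilde\nabla\varphi_\epsilon|_h\le\sum_i|\tilde\nabla\psi_i|_h$. For (2),
\[
\int_M|\tilde\nabla\varphi_\epsilon|\,d\mu_h\le \sum_i\int_{B_{2r_i}}|\tilde\nabla\psi_i|\,d\mu_h\le C(h)\sum_i r_i^{n-1}<C\epsilon,
\]
proving (2). For (3), using $(\sum a_i)^p\le N^{p-1}\sum a_i^p$ on the bounded-overlap support,
\[
\int_M|\tilde\nabla\varphi_\epsilon|^p\,d\mu_h\le C\sum_i r_i^{n-p},
\]
which is only uniformly bounded. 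To upgrade to $\lim_{\epsilon\to 0}=0$, I would replace the linear cutoff on each $B_{r_i}$ by an annular/logarithmic cutoff spread over $[r_i,\,r_i L_\epsilon]$ with $L_\epsilon\to\infty$: the enlarged annulus has $h$-volume comparable to $r_i^n L_\epsilon^n$, but the gradient drops to order $1/(r_i L_\epsilon\log L_\epsilon)$, yielding an overall factor $(\log L_\epsilon)^{1-p}$ that uses $p>1$ crucially to force decay while the sum $\sum_i r_i^{n-p}$ stays finite.

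\textbf{Main obstacle.} The delicate part is precisely this last step in (3): extracting a limit (rather than just boundedness) from $\mathcal{H}^{n-p}(\Sigma)<\infty$ (not necessarily $=0$) requires exploiting $p>1$ via a log/multiscale refinement, and one must verify that the cumulative tail of the gradient integrals summed over dyadic scales still goes to zero. Everything else reduces to routine covering estimates and a standard smooth partition-of-unity-type construction.
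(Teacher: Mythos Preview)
The paper does not give its own proof of this lemma; it is quoted as a ``standard cut-off function lemma'' with references to \cite[Lemma A.1]{JSZ} and \cite{Cheeger}. Your construction and the verification of (1) and (2) are fine and are essentially the standard argument.

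There is, however, a genuine gap in your treatment of (3). Your logarithmic cutoff on the annulus $B_{r_iL}\setminus B_{r_i}$ has $|\tilde\nabla\psi_i|\sim 1/(d\log L)$ (not the constant $1/(r_iL\log L)$ you wrote), so
\[
\int_{B_{r_iL}\setminus B_{r_i}}|\tilde\nabla\psi_i|^p\,d\mu_h\;\sim\;\frac{1}{(\log L)^p}\int_{r_i}^{r_iL}s^{n-1-p}\,ds\;\sim\;\frac{L^{n-p}}{(\log L)^p}\,r_i^{n-p}\qquad(p<n),
\]
which blows up as $L\to\infty$. The factor $(\log L)^{1-p}$ you claim appears only when $p=n$. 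More conceptually, for $p<n$ the relative $p$-capacity of $B_r$ in any larger ball is of order $r^{n-p}$ regardless of the profile, so no modification of the single-ball cutoff can push $\sum_i\int|\tilde\nabla\psi_i|^p$ below a fixed multiple of $\sum_i r_i^{n-p}$. Since the application in the paper uses the lemma with exponent $q=p/(p-1)\in\bigl(1,\tfrac{n}{n-1}\bigr)$, i.e.\ precisely the range $q<n$, this is exactly the case you need.

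The standard fix (and what the cited references do) is global rather than ball-by-ball: the cutoffs $\eta_k$ built at scales $2^{-k}$ satisfy $\eta_k\to 1$ a.e.\ and $\|\eta_k\|_{W^{1,p}}\le C$, hence $\eta_k\rightharpoonup 1$ weakly in $W^{1,p}$ (here $p>1$ is used via reflexivity), so $\nabla\eta_k\rightharpoonup 0$ weakly in $L^p$. Mazur's lemma then produces finite convex combinations $\varphi_\epsilon=\sum_j a_j\eta_{k_j}$ (with all $k_j$ large) for which $\int_M|\nabla\varphi_\epsilon|^p\to 0$; such combinations still vanish on a neighborhood of $\Sigma$ and equal $1$ outside $B_\epsilon(\Sigma)$, giving (1) and (3) simultaneously.
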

\begin{proof}[proof of Lemma \ref{lmA1}]

	Let $\eta_\epsilon\ge 0$ be a sequence of smooth cut-off functions of $\Sigma$ as in Lemma \ref{l:cut-off} such that 
	\begin{itemize}
		\item[(1)] $\eta_\epsilon\equiv 1$ in a neighborhood of $\Sigma$,
		\item[(2)] ${\rm supp ~}\eta_\epsilon \subset B_{\epsilon}(\Sigma)$ and $0\le \eta_\epsilon\le 1$.
		\item[(3)] $\lim_{\epsilon\to 0}\int_M|\nabla \eta_\epsilon|^{p/(p-1)}d\mu_h=0$. 
	\end{itemize}
	For any $u\in C^\infty(M)$ with $u\ge 0$, we have 
	\begin{align}
		\langle R_g-a,u\rangle=\langle R_g-a,\eta_\epsilon u\rangle+\langle R_g-a,(1-\eta_\epsilon)u\rangle.
	\end{align}
	Noting that for any $\epsilon>0$, we have 
	\begin{align}
		\langle R_g-a,(1-\eta_\epsilon)u\rangle=\int_{M\setminus \Sigma} (R_g-a)(1-\eta_\epsilon)ud\mu_g\ge   0. 
	\end{align}
	Thus to prove the lemma, it suffices to show that 
	\begin{align}\label{e:limitepsilonRgu0}
		\lim_{\epsilon\to 0}|\langle R_g-a,\eta_\epsilon u\rangle|=0.
	\end{align}
	Actually, noting that $u\in C^\infty(M)$, by definition, $	
		\langle R_g,u\rangle=\int_M \left(-V\cdot \tilde\nabla \left(u \frac{d\mu_g}{d\mu_h}\right)+Fu \frac{d\mu_g}{d\mu_h}\right)d\mu_h$, we can estimate 
		\begin{align*}
		|\langle R_g-a,\eta_\epsilon u\rangle|\le& \int_M |V|\cdot \left|\tilde\nabla (\eta_\epsilon u \frac{d\mu_g}{d\mu_h})\right|d\mu_h+\int_M |F-a|\cdot \eta_\epsilon u \frac{d\mu_g}{d\mu_h}d\mu_h\\
		\le &C\int_M |V||\bar \nabla \eta_\epsilon|d\mu_h +C \int_M |V|  |\tilde\nabla g| \eta_\epsilon d\mu_h+C\int_M |V| \eta_\epsilon d\mu_h+C \int_M|F-a		|\eta_\epsilon d\mu_h\\
		\le& C\left(\int_{M}|\tilde\nabla g|^pd\mu_h\right)^{1/p}\left(\int_M\left(|\tilde\nabla\eta_\epsilon|^{p/(p-1)}+|\eta_\epsilon|^{p/(p-1)}\right) d\mu_h\right)^{(p-1)/p}\\
		&+C\left(\left(\int_{M}|\tilde\nabla g|^p d\mu_h\right)^{2/p}+1\right)\left(\int_M|\eta_\epsilon|^{p/(p-2)}d\mu_h\right)^{(p-2)/p},
	\end{align*}
	here and below $C=C(n,p,h,g)$ will denote a positive constant independent of $\epsilon$ and varies from line to line.

	By H\"older inequality and Sobolev inequality, we have
	\begin{align*}
\left(\int_M|\eta_\epsilon|^{p/(p-1)}d\mu_h\right)^{(p-1)/p}\le C(h)\left(\int_M|\eta_\epsilon|^{p/(p-2)}d\mu_h\right)^{(p-2)/p}\le C(h) \left(\int_M|\tilde\nabla\eta_\epsilon|^{p/(p-1)} d\mu_h\right)^{(p-1)/p}.
	\end{align*}

	Thus we have
	\begin{align*}
|\langle R_g-a,\eta_\epsilon u\rangle|\le C\left(\int_M|\tilde\nabla\eta_\epsilon|^{p/(p-1)} d\mu_h\right)^{(p-1)/p}.
	\end{align*}

	Letting $\epsilon\to 0$, by the properties of $\eta_\epsilon$ we get \eqref{e:limitepsilonRgu0}. Thus we finish the proof of the lemma for $n< p<\infty$.\\
	When $p=\infty$, the arguement above still works. (See also  \cite{Lee2015a}.) Thus the lemma holds.
\end{proof}

\bibliographystyle{plain}

\end{document}